\DeclareMathOperator*{\intr}{int}
\newcommand{\Tcal}{\mathcal{T}}
\newcommand{\Pcal}{\mathcal{P}}
\newcommand{\Ccal}{\mathcal{C}}
\newcommand{\onebld}{{\mbox{\bf 1}}}
\newcommand{\wt}{\widetilde}
\newcommand{\ol}{\overline}
\newtheorem{theorem}{Theorem}
\newtheorem{lemma}{Lemma}
\newtheorem{corollary}{Corollary}
\newtheorem{proposition}{Proposition}
\newtheorem{definition}{Definition}
\newcommand{\bb}{\mathbb}
\newcommand{\R}{\bb R}
\newcommand{\Z}{\bb Z}
\begin{document}
\title{Approximation of Minimal Functions by Extreme Functions}
\author{Teresa M. Lebair and Amitabh Basu}
\date{\today}
\maketitle
%
%
\begin{abstract}
In a recent paper, Basu, Hildebrand, and Molinaro established that the set of continuous minimal functions for the 1-dimensional Gomory-Johnson infinite group relaxation possesses a dense subset of extreme functions.    
The $n$-dimensional version of this result was left as an open question. In the present paper, we settle this query in the affirmative:~for any integer $n \geq 1$, every continuous minimal function can be arbitrarily well approximated by an extreme function in the $n$-dimensional Gomory-Johnson model. 
\end{abstract}
%
%
\section{Introduction}\label{sec:intro}
%
%

Gomory and Johnson discovered that the study of cutting plane theory for integer programming could greatly benefit from the analysis of the class of minimal functions.
They published their results in a series of groundbreaking papers (c.f.~\cite{infinite,infinite2}), written in the early 1970's.  
Interest in minimal functions then resurfaced in the early 2000's, with intense activity in the past 7--8 years; see the surveys~\cite{basu2016light,basu2016light2,basu2015geometric} and the references therein.  We define such functions as follows.
\begin{definition}\label{def:strong_min}
Let $n$ be a natural number and $b \in \mathbb R^n \setminus \mathbb Z^n$.  
We say that $\pi : \mathbb R^n \rightarrow \mathbb R$ is a {\bf minimal function} if the following three conditions are satisfied:
\begin{itemize}
\item[(C1)] $\pi(z) = 0$ for all $z \in \mathbb Z^n$, and $\pi \geq 0$,
\item[(C2)] $\pi$ satisfies the {\em symmetry property}, i.e., for all $x \in \mathbb R^n$,
$
\pi(x)+\pi(b-x) = 1,
$ and
\item[(C3)] $\pi$ is {\em subadditive}, i.e., $\pi(x+y) \leq \pi(x)+\pi(y)$ for all $x, y \in \mathbb R^n$.
\end{itemize}
\end{definition}
%
%
It is not hard to see that a minimal function $\pi$ is {\bf periodic with respect to $\mathbb Z^n$}, i.e., $\pi(x+z) = \pi(x) \text{ for all } x \in \mathbb R^n, z \in \mathbb Z^n$ (see, for example,~\cite{basu2016light,basu2016light2,basu2015geometric}).
%
%

Minimal functions are closely linked with integer programming.  
Consider the feasible region of a pure integer program written in the simplex tableaux form
$$\Big\{(x, y) \in \Z^n_+ \times \Z^k_+ \,\Big|\, x + b = \sum_{i=1}^k  p^{(i)} y_i\Big\},$$ 
where $b, p^{(1)}, \dots, p^{(k)} \in \R^n$. We observe that for any feasible $(x,y)$ and any minimal function $\pi: \R^n \to \R$, we have 
%
$$
\sum_{i=1}^k  \pi(p^{(i)}) \, y_i \geq \pi\Big(\sum_{i=1}^k p^{(i)} y_i \Big) = \pi(x + b) = \pi(b) = 1,
$$
%
where the first inequality follows from the subadditivity of $\pi$ (condition (C3) above), the second equality follows from the periodicity of $\pi$, and the last equality follows from conditions (C1) and (C2). Thus, the inequality $\sum_{i=1}^k \pi(p^{(i)}) \, y_i \geq 1$ is a {\em valid inequality} for the feasible region of our integer program. 
Note that the simplex tableaux solution $(x,y) = (-b, 0)$ does not satisfy this inequality. Therefore, adding such inequalities to the initial linear programming relaxation yields a tighter system.
This observation illustrates the importance of minimal functions in the integer programming literature. 

Observe that the family of minimal functions is a convex set in the (infinite dimensional) space of functions on $\R^n$; convex combinations of any two minimal functions satisfy conditions (C1)-(C3).  
In fact, under the natural topology of pointwise convergence, the family of minimal functions forms a compact convex set. This motivates the study of the extreme points of the set of minimal functions, i.e., minimal functions $\pi$ that are not the convex combination of two other distinct minimal functions. Such functions are naturally called {\em extreme functions}. A significant part of the recent effort in understanding minimal functions has been spent on characterizing and understanding the properties of these extreme functions.

We will restrict our attention to {\em continuous} minimal functions in this paper. It was recently shown in~\cite{basu2016structure} that (i) it is sufficient to consider only continuous minimal functions in regards to the application of integer programming with rational data and (ii) that focusing on continuous minimal functions is not very restrictive, in a precise mathematical sense. 
Note that the set of continuous minimal functions is also a convex subset of the set of minimal functions.

In~\cite{basu2016minimal}, Basu, Hildebrand, and Molinaro observed a curious property for the convex set of continuous minimal functions $\pi:\R\to \R$, i.e., with $n=1$. Moreover, they demonstrated that the extreme points of this convex set are dense in this set (under the topology of pointwise convergence). In fact, they established a slightly stronger result: for any continuous minimal function $\pi: \R \to \R$ and $\epsilon > 0$, there exists a continuous extreme function $\pi^*: \R \to \R$ such that $|\pi(x) - \pi^*(x)| \leq \epsilon$ for all $x \in \R$, i.e., $\pi^*$ is shown to be extreme in the set of all minimal functions, not only in the set of continuous minimal functions. This observation is clearly an infinite-dimensional phenomenon; compact convex sets in finite dimensions do not possess dense subsets of extreme points. The question as to whether the ``density" result of~\cite{basu2016minimal} continues to hold for general integers $n \geq 2$ was unresolved. 
In fact, for a related family of functions -- the so-called {\em cut generating functions for the continuous model} -- an analogous result was established to be true only for $n=1, 2$, and false for $n \geq 3$. This created even more mystery for the Gomory-Johnson model, with $n \geq 2$, that we consider in the present paper.

{\bf This paper resolves the above ``density" question and shows that for all $n\geq 1$, continuous extreme functions form a dense subset of the set of continuous minimal functions.} While the proof strategy in this paper borrows ideas from the 1-dimensional proof in~\cite{basu2016minimal}, there are significant technical barriers that must be surmounted in order to obtain the general $n$-dimensional result.  For example, the fact that any continuous minimal function can be arbitrarily well-approximated by a {\em piecewise linear} minimal function can be derived easily via interpolation for $n=1$, and is the first step towards the 1-dimensional proof in~\cite{basu2016minimal}. The absence of such a result for $n\geq 2$ is a major obstacle in extending the density result to higher dimensions. We show that this piecewise linear approximation does indeed hold for general $n\geq 2$, by a series of careful and delicate constructions (c.f.~Corollary~\ref{cor:PWL-approx}). In our opinion, this result alone is likely to be a very useful tool for subsequent research in the area. Similar non-trivial departures from the techniques of~\cite{basu2016minimal} are required to obtain our main result.  


This paper is organized as follows.  In Section~\ref{sec:main_result}, we formally state the main result of this paper and give an overview of the proof. 
Additionally, we lay out the notation and definitions used in the rest of the paper in this section.  In Sections~\ref{sec:pwl_approx},~\ref{sec:perturb}, and~\ref{sec:sym_fill_in} we construct several approximations of a given continuous, minimal $\pi: \mathbb R^n \rightarrow \mathbb R$, the last one of which is an extreme function of the set of  minimal functions.  Finally, in Section~\ref{sec:proof}, we assemble the complete proof of the main result. 
%
%

%
%
\section{Main Result and Overview of Proof}\label{sec:main_result}
%
%
The main result of this paper is formally given by the following Theorem.
%
%
\begin{theorem}\label{thm:main_thm}
Let $n \geq 1$ be any natural number and $b \in \mathbb Q^n \setminus \mathbb Z^n$. Let $\pi: \mathbb R^n \rightarrow \mathbb R$ be any continuous, minimal function, and let $\epsilon > 0$ be given.  Then there exists an extreme function $\pi^\prime: \mathbb R^n \rightarrow \mathbb R$ such that 
$\| \pi - \pi^\prime\|_\infty < \epsilon$. 
\end{theorem}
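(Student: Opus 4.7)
The plan is to follow the three-stage reduction laid out in Sections~\ref{sec:pwl_approx}--\ref{sec:sym_fill_in}, constructing a chain of approximations $\pi \mapsto \pi_1 \mapsto \pi_2 \mapsto \pi'$, each close to its predecessor in $\|\cdot\|_\infty$, where $\pi_1$ is a piecewise linear minimal function, $\pi_2$ is a perturbation of $\pi_1$ with additional structural properties, and $\pi'$ is the final extreme function obtained via a symmetric fill-in procedure. The overall strategy mirrors the one-dimensional proof of~\cite{basu2016minimal}, but each stage requires substantial new work in dimension $n \geq 2$.

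For stage one, I would fix a large integer $q$ with $qb \in \Z^n$ such that $\pi$ varies by at most $\epsilon/3$ across any cube of side length $1/q$, and construct a continuous piecewise linear minimal $\pi_1$ on a simplicial subdivision of $\R^n / \Z^n$ compatible with $\tfrac{1}{q}\Z^n$; the naive lattice interpolant of $\pi$ has the correct sup-norm error but need not satisfy subadditivity or the exact symmetry condition (C2), so the heart of the argument is to adjust the lattice values and employ a carefully designed interpolation that preserves minimality (this is precisely what Corollary~\ref{cor:PWL-approx} delivers). Stage two perturbs $\pi_1$ to a nearby piecewise linear minimal $\pi_2$ possessing a rich set of tight additivity relations $\pi_2(x) + \pi_2(y) = \pi_2(x+y)$, engineered so that any $\bar\pi$ for which $\pi_2 \pm \bar\pi$ are both minimal is forced to be essentially determined on the refined lattice. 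Stage three defines $\pi'$ via a symmetric fill-in, extending $\pi_2$ from a sparse set to all of $\R^n$ through the tightest subadditive and symmetric relations compatible with $\pi_2$; the key claim is that the combined effect of stages two and three forces the only admissible $\bar\pi$ to vanish identically, establishing extremality of $\pi'$. Tracking the sup-norm error through all three stages and choosing parameters so that the cumulative error stays below $\epsilon$ yields the theorem.

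The main obstacle, by a wide margin, is stage one. In dimension one, piecewise linear interpolation of a subadditive function at a grid of breakpoints is automatically subadditive, since for any $x, y$ one can compare the interpolated values directly using the grid relations; this completely fails for $n \geq 2$, because verifying subadditivity of the interpolant requires controlling $\pi_1(x+y)$ when $x$ and $y$ lie in general position inside simplices, with no a priori bound coming solely from vertex values. Overcoming this will likely require a symmetric averaging of $\pi$ on $\tfrac{1}{q}\Z^n$ combined with a simplicial interpolation whose combinatorics respects the subadditivity cone, possibly built by taking the largest subadditive and symmetric function dominated by a suitable smoothing of $\pi$. Once this piecewise linear approximation is in hand, stages two and three can be viewed as higher-dimensional adaptations of the one-dimensional perturbation and fill-in procedures of~\cite{basu2016minimal}, with the extra freedom afforded by refining $\tfrac{1}{q}\Z^n$ playing the role of segment subdivision in the original proof.
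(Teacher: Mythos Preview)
Your three-stage skeleton matches the paper's, but the mechanisms you propose at each stage diverge from what is actually done, and in ways that matter.

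For stage one, you correctly flag that naive simplicial interpolation destroys subadditivity for $n\geq 2$, but your proposed fix---build the interpolant to be subadditive from the outset, perhaps as the largest subadditive symmetric function below a smoothing---is not the paper's route and is itself nontrivial to carry out. The paper sidesteps this entirely: the piecewise linear $\wt\pi$ of Proposition~\ref{prop:pwl} is \emph{not} required to be subadditive (only $\wt\pi + \tfrac{\epsilon}{6}$ is). Subadditivity is restored in stage two by forming the convex combination $\pi_{comb} = (1-\tfrac{5\epsilon}{6})\wt\pi + \tfrac{5\epsilon}{6}\pi_{\delta_3}$, where $\pi_{\delta_3}$ is an explicit minimal function built from the gauge $\gamma_{\delta_3}$ of a carefully chosen simplex (Lemma~\ref{lem:pi_delta}). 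This deferral of subadditivity is the idea that unlocks stage one.

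Your stages two and three have the wrong goals. You aim in stage two to engineer many \emph{tight} additivity relations; the paper does the opposite, producing \emph{strict} slack $\Delta\pi_{comb}(x,y)\geq \tfrac{\epsilon}{4}$ for all $(x,y)$ away from $\Z^n$ and $b-\Z^n$ (Proposition~\ref{prop:pi_comb}(iii)), precisely so that the later fill-in and symmetrization can perturb $\pi_{comb}$ without breaking subadditivity. And for extremality you propose a direct perturbation argument showing any admissible $\bar\pi$ vanishes; the paper never does this. Instead, the fill-in~\eqref{eqn:pi_fill_in} and symmetrization~\eqref{eqn:pi_sym} are engineered so that $\pi_{sym}$ is a genuinely $n$-dimensional, $(n+1)$-slope minimal function, and extremality is then read off from \cite[Theorem~1.7]{basu-hildebrand-koeppe-molinaro:k+1-slope} as a black box. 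The specific choice of the gauge $\gamma_{\delta_3}$ (and the fact that $\pi_{comb}$ already agrees with $\tfrac{5\epsilon}{12}\gamma_{\delta_3}$ near each integer point) is what makes all the slopes of $\pi_{sym}$ come from the single set $\{\tfrac{5\epsilon}{12}g_{\delta_3}^{(1)},\dots,\tfrac{5\epsilon}{12}g_{\delta_3}^{(n+1)}\}$; nothing in your outline provides a substitute for this.
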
 
%
%
Note that by the periodicity of the set of minimal functions, we may assume that $b \in ([0,1)^n \cap \mathbb Q^n)\setminus\{0\}$ without loss of generality, as we will do in the rest of this paper. 

The remainder of this work is devoted to proving Theorem~\ref{thm:main_thm}.  
Inspired by~\cite{basu2016minimal}, the steps utilized to build $\pi^\prime$ are outlined below. 
 Moreover, the following list of steps describes the development of a sequence of functions (i.e., $\wt\pi$, $\pi_{comb}$, $\pi_{fill-in}$, and $\pi_{sym}$) that each approximate $\pi$ and have certain desirable properties.  The last function in this sequence $\pi_{sym}$ is the desired extreme function $\pi^\prime$:
\begin{itemize}
\item[(i)] We first approximate the continuous,  minimal function $\pi$ by a piecewise linear function $\wt\pi$, which has a number of useful attributes, e.g., Lipschitz continuity (c.f.~Propsition~\ref{prop:pwl}), but is not necessarily subadditive.  
\item[(ii)] We next construct a piecewise linear minimal function $\pi_{comb}$, by perturbing $\wt\pi$ to regain subadditivity.  Furthermore, taking a suitable perturbation of $\wt \pi$ will ensure that
\begin{equation}\label{eqn:Delta_pi_lift_positive}
\pi_{comb}(x) + \pi_{comb}(y) - \pi_{comb}(x+y) > \frac{\epsilon}{4} > 0 \text{ for ``most" } x,y \in \mathbb R^n 
\end{equation}
(c.f.~(iii) of Proposition~\ref{prop:pi_comb}).  This will allow us to make slight modifications to $\pi_{comb}$ in the subsequent constructions, while maintaining subadditivity.    
\item[(iii)] Just as a symmetric 2-slope fill-in procedure is used in~\cite{basu2016minimal}, we use a symmetric $(n+1)$-slope fill-in procedure to produce the extreme function $\pi_{sym}$.  We first apply the (asymmetric) $(n+1)$-slope fill-in procedure of~(\ref{eqn:fill_in}) to $\pi_{comb}$ in order to obtain $\pi_{fill-in}$ (c.f.~(\ref{eqn:pi_fill_in})) and then symmetrize $\pi_{fill-in}$ via~(\ref{eqn:pi_sym}) to obtain $\pi_{sym}$.  We will see that $\pi_{sym}$ is a piecewise linear $(n+1)$-slope (c.f.~Definition~\ref{def:slope}), genuinely $n$-dimensional (c.f.~Definition~\ref{def:genuine}),  minimal function (c.f.~Proposition~\ref{prop:pi_sym}).  Hence, we may apply~\cite[Theorem 1.7]{basu-hildebrand-koeppe-molinaro:k+1-slope} to establish that $\pi^\prime \equiv \pi_{sym}$ is extreme.  We will also see that $\| \pi - \pi^\prime\|_\infty < \epsilon$ (c.f.~Section~\ref{sec:proof}). The subadditivity of $\pi^\prime \equiv \pi_{sym}$ roughly follows from~(\ref{eqn:Delta_pi_lift_positive}).
\end{itemize}
%
%


\paragraph{Notation and terminology.} For $n \in \mathbb N$, $[n]$ denotes the set $\{1,\dots,n\}$, and $e^{(1)}, \dots, e^{(n)} \in \mathbb R^n$ denote the 1-st through $n$-th basic unit vectors.  
Let $\onebld \in \mathbb R^n$ denote the vector of all ones.  
We also define the set of permutations 
\[\Sigma_n :=\left\{\sigma = (i_1,\dots,i_n) \,|\, \sigma \text{ is a permutation of } (1,\dots,n) \right\}.\]
Additionally, $v_i$ denotes the $i$-th component of the vector $v$.  
For a set $A \subseteq \mathbb R^n$, let $\text{conv}(A)$, $\text{cone}(A)$, $\text{int}(A)$, and $\text{relint}(A)$ denote the convex hull, the positive hull, the interior, and the relative interior of the set $A$, respectively.  Let $\bar B_\delta^\infty(x)$ denote the closed ball centered at $x \in \mathbb R^n$ with radius $\delta > 0$, with respect to the $\infty$-norm. 
For a function $f:\mathbb R^n \rightarrow \mathbb R$ and $x,d \in \mathbb R^n$, let $f^\prime(x;d)$ denote the directional derivative of $f$ at $x$ in the direction $d$ and $\nabla f(x)$ denote the gradient of $f$ at $x$, whenever these objects exist.  
Finally, for any $\pi:\mathbb R^n \rightarrow \mathbb R$, we define $\Delta \pi: \mathbb R^n \times \mathbb R^n \rightarrow \mathbb R$ such that
\[
\Delta \pi(x,y) := \pi(x) +\pi(y)-\pi(x+y).
\]
Consequently, $\pi$ is subadditive if and only if $\Delta\pi \geq 0$ everywhere. This fact will be used many times throughout this paper, without further comment.

Our goal is to construct an extreme function $\pi^\prime$ that is piecewise linear.  Therefore, we begin with a short review of piecewise linear functions. In order to discuss piecewise linear functions, we first introduce the notion of a polyhedral complex.
%
%
\begin{definition}\cite[Definition~5.1]{z}\label{def:poly_com} A {\bf polyhedral complex} $\Ccal$ is a finite collection of cells (i.e., polyhedra) in $\mathbb R^n$ (including the empty polyhedron) such that 
\begin{itemize}
\item[(i)] if $P \in \Ccal$, then each face of $P$ is also in $\Ccal$, and 
\item[(ii)] for any $P, Q \in \Ccal$, $P \cap Q$ is a face of both $P$ and $Q$.
\end{itemize}
\end{definition}
A cell $P \in \Ccal$ is {\bf maximal} if there does not exist any $Q \in \Ccal$ properly containing $P$.  Additionally, a complex $\Ccal$ is {\bf pure} if all maximal cells in $\Ccal$ have the same dimension, and {\bf complete} if $\cup_{P \in \Ccal} P = \mathbb R^n$.  
Finally, we say that a polyhedral complex $\Ccal$ is a {\bf triangulation} if each $P \in \Ccal$ is a simplex.

For fixed $\delta > 0$, we define $U_\delta$ to be the group generated by the vectors $\delta e^{(1)},\dots,\delta e^{(n)}$ (under the usual addition operation).  Then let $\Tcal_\delta$ denote the triangulation containing the elements and all faces of the elements (including the empty face) in the set
\[
 \Big\{ \text{conv}\Big(\Big\{u, u+\delta \sum_{j=1}^1 e^{(i_j)}, \dots, u+\delta \sum_{j=1}^n e^{(i_j)}\Big\}\Big) \; \Big| \; u \in U_\delta,\, (i_1,\dots,i_n) \in \Sigma_n  \Big\}.
\]
Note that $\Tcal_\delta$ is the well-known polyhedral complex called the {\em Coxeter-Freudenthal-Kuhn} triangulation (c.f.~\cite[pp.~136]{allgower1997numerical}).  

\begin{definition}\label{def:pwl} Let $\Ccal$ be a pure, complete polyhedral complex in $\mathbb R^n$.  Consider a function $\theta : \mathbb R^n \rightarrow \mathbb R$, where, for each $P \in \Ccal$, there exists a vector $a^{P} \in \mathbb R^n$ and a constant $\delta_P$ satisfying $\theta(x) = \langle a^{P}, x \rangle + \delta_P$ for all $x \in P$.  Then $\theta$ is called a {\bf piecewise linear function}, more specifically, a {\bf piecewise linear function with cell complex $\Ccal$}.
\end{definition}

Observe that Definition~\ref{def:pwl} implies that any piecewise linear function is automatically continuous.

%
%

\section{Piecewise Linear Approximations}\label{sec:pwl_approx}

The main goal of this section is to establish Proposition~\ref{prop:pwl}, in which we show the existence of $\wt \pi$, a piecewise linear approximation of a given continuous minimal $\pi$, that holds several favorable properties.  We will see that Proposition~\ref{prop:pwl} proceeds immediately from Lemmas~\ref{lem:pwl} and~\ref{lem:pi_adjust}.
%
%
\begin{lemma}\label{lem:pwl}
Suppose that $b \in ([0,1)^n \cap \mathbb Q^n)\setminus\{0\}$ and $\pi:\mathbb R^n \rightarrow \mathbb R$ is a continuous,  minimal function.  Then for each $\epsilon >0$, there exists $\pi_{pwl}:\mathbb R^n \rightarrow \mathbb R$ and $\delta_1 > 0$ such that $b \in U_{\delta_1}$ and the following items hold:
(i) $\pi_{pwl}$ is piecewise linear with cell complex $\Tcal_{\delta_1}$; 
(ii) $\| \pi - \pi_{pwl} \|_\infty < \epsilon$; 
(iii) $(\pi_{pwl} + 3\epsilon)$ is subadditive; 
(iv) $\pi_{pwl}$ satisfies conditions (C1) and (C2) in Definition~\ref{def:strong_min}; 
(v) $\pi_{pwl}$ is periodic with respect to $\mathbb Z^n$; and 
(vi) $\pi_{pwl}$ is Lipschitz continuous.
\end{lemma}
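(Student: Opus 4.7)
The plan is to take $\pi_{pwl}$ to be the piecewise linear interpolant of $\pi$ on the vertices of the Coxeter-Freudenthal-Kuhn triangulation $\Tcal_{\delta_1}$, for a carefully chosen mesh size $\delta_1 > 0$. Concretely, I would pick $\delta_1 = 1/(kN)$, where $N$ is a common denominator for the coordinates of $b \in \Q^n$ and $k$ is a positive integer; this guarantees both $b \in U_{\delta_1}$ and $\Z^n \subseteq U_{\delta_1}$, so that $b$ and every integer lattice point are vertices of $\Tcal_{\delta_1}$. Since $\pi$ is continuous and $\Z^n$-periodic it is uniformly continuous on $\R^n$, so by taking $k$ large enough I can arrange $|\pi(x)-\pi(y)| < \epsilon$ whenever $\|x-y\|_\infty \leq \delta_1$. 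Because each simplex of $\Tcal_{\delta_1}$ has $\infty$-diameter exactly $\delta_1$, property (ii) follows by expanding $x$ in its barycentric coordinates inside its containing simplex; property (i) holds by construction.

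Properties (iii), (v), and most of (iv) are then routine consequences of (ii). For (iii), subadditivity of $\pi$ together with three applications of the $\epsilon$-bound in (ii) yields $\pi_{pwl}(x)+\pi_{pwl}(y)-\pi_{pwl}(x+y) \geq \pi(x)+\pi(y)-\pi(x+y)-3\epsilon \geq -3\epsilon$, so $\pi_{pwl}+3\epsilon$ is subadditive. The inclusion $\Z^n \subseteq U_{\delta_1}$ makes $\Tcal_{\delta_1}$ invariant under integer translations, which transfers periodicity of $\pi$ to $\pi_{pwl}$ (property (v)) and forces $\pi_{pwl}(z) = \pi(z) = 0$ for $z \in \Z^n$; nonnegativity is automatic because each value of $\pi_{pwl}$ is a convex combination of nonnegative vertex values of $\pi$. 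Lipschitz continuity (vi) follows since $\Z^n$-periodicity leaves only finitely many distinct affine pieces modulo integer translation, uniformly bounding the slopes.

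The one genuinely nontrivial step, and the reason for working with the Kuhn triangulation in particular, is the symmetry identity $\pi_{pwl}(x)+\pi_{pwl}(b-x)=1$ in property (iv). The key geometric observation I would prove is that $\Tcal_{\delta_1}$ is invariant under the reflection $x \mapsto b-x$ whenever $b \in U_{\delta_1}$. To verify this I would take a canonical simplex $\text{conv}\{u,\, u+\delta_1 e^{(i_1)},\, \dots,\, u+\delta_1 \sum_{j=1}^n e^{(i_j)}\}$ and check directly that its image is the canonical simplex with base vertex $u' := b - u - \delta_1 \sum_{j=1}^n e^{(i_j)} \in U_{\delta_1}$ and reversed permutation $(i_n,\dots,i_1)$. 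Once this reflection-invariance is in hand, for any $x=\sum_i \lambda_i v_i$ in a simplex $S$ with vertices $v_0, \dots, v_n$, the point $b-x = \sum_i \lambda_i (b-v_i)$ has the same barycentric coordinates in the reflected simplex $b-S \in \Tcal_{\delta_1}$, and the vertex-level identity $\pi(v)+\pi(b-v)=1$ coming from minimality of $\pi$ extends by linearity to $\pi_{pwl}(x)+\pi_{pwl}(b-x)=1$ on all of $\R^n$. I expect this reflection-invariance of the Kuhn triangulation to be the only subtle input; everything else reduces to bookkeeping built on the uniform continuity of $\pi$.
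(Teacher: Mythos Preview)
Your proposal is correct and follows essentially the same approach as the paper: both take $\pi_{pwl}$ to be the linear interpolant of $\pi$ on the vertices of $\Tcal_{\delta_1}$ with $\delta_1 = 1/p$ chosen so that $b \in U_{\delta_1}$ and uniform continuity gives the $\epsilon$-bound, then deduce (ii)--(vi) exactly as you outline. Your explicit verification that the reflection $x \mapsto b-x$ sends the canonical simplex with base $u$ and permutation $(i_1,\dots,i_n)$ to the one with base $b-u-\delta_1\onebld$ and reversed permutation $(i_n,\dots,i_1)$ is a welcome detail; the paper asserts $b-T \in \Tcal_{\delta_1}$ without spelling this out.
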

%
%
\begin{proof}
Since $\pi$ is continuous on $[-1,1]^n$, a compact set, $\pi$ must also be uniformly continuous on $[-1,1]^n$.  Therefore, for $\epsilon >0$, there exists $\delta_1 >0$ such that $\|x-y\|_\infty \leq \delta_1$ implies that $|\pi(x)-\pi(y)| < \epsilon$ for all $x, y \in [-1,1]^n$, and consequently, for all $x,y \in \mathbb R^n$ due to the periodicity of $\pi$.  
Without loss of generality, we can select $\delta_1$ such that $\delta_1 = p^{-1}$ for some $p \in \mathbb N$ and $b \in U_{\delta_1}$.    
Then for any maximal $T \in \Tcal_{\delta_1}$ with vertices $u^{(1)}, \dots,u^{(n+1)}$, we have
\begin{equation}\label{eqn:dist2vertex}
\|u^{(i)} - x\|_\infty \leq  \delta_1, \text{ so } |\pi(u^{(i)}) - \pi(x)| < \epsilon \text{ for all } x \in T \text{ and } i \in [n+1].
\end{equation}  
We make the following observations.

(i)  In view of~\cite[pp.~15]{hudson1969piecewise}, we let $\pi_{pwl}$ be the unique piecewise linear function with cell complex $\Tcal_{\delta_1}$ such that $\pi_{pwl}(u) = \pi(u)$ for each $u\in U_{\delta_1}$.  Hence, (i) is satisfied.  

(ii) Let $x \in \mathbb R^n$ so that $x \in T$ for some maximal $T \in \Tcal_{\delta_1}$.  Then $x$ can be written as a convex combination of the vertices $u^{(1)}, \dots, u^{(n+1)} \in U_{\delta_1}$ of $T$, i.e., $x = \sum_{i=1}^{n+1} \lambda_i u^{(i)}$, where each $\lambda_i \geq 0$ and $\sum_{i=1}^{n+1} \lambda_i = 1$.
Observe via~(\ref{eqn:dist2vertex}) that (ii) holds, as
\[
|\pi(x) - \pi_{pwl}(x)| = \Big|\pi(x) - \sum_{i=1}^{n+1}  \lambda_i\pi_{pwl}(u^{(i)})\Big| = \Big|\pi(x) - \sum_{i=1}^{n+1}  \lambda_i\pi(u^{(i)})\Big| \leq \sum_{i=1}^{n+1}  \lambda_i |\pi(x) - \pi(u^{(i)})| < \epsilon.
\]
  
(iii) Note that by (ii) and the subadditivity of $\pi$,
\begin{equation}\label{eqn:Delta_pi_pwl}
\Delta(\pi_{pwl} + 3\epsilon)(x,y) = \pi_{pwl}(x) + \pi_{pwl}(y) - \pi_{pwl}(x+y) + 3\epsilon \geq \Delta \pi(x,y) \geq 0.
\end{equation}

(iv) The function $\pi_{pwl}$ satisfies (C1) since $\mathbb Z^n \subseteq U_{\delta_1}$, by choice of $\delta_1$, and the fact that the linear interpolant of a nonnegative function is nonnegative.  To see that $\pi_{pwl}$ satisfies (C2), note that if $x \in T \in \Tcal_{\delta_1}$ has vertices $u^{(1)}, \dots, u^{(n+1)}$, then $b -x \in b-T \in \Tcal_{\delta_1}$, where the simplex $b-T$ has vertices $b-u^{(1)}, \dots, b-u^{(n+1)}$, as $b \in U_{\delta_1}$.  Therefore, by the piecewise linear structure of $\pi_{pwl}$, we have that there exist $\lambda_i$'s, $i \in [n+1]$, such that each $\lambda_i \geq 0$ and $\sum_{i=1}^{n+1} \lambda_i = 1$, so that $x = \sum_{i=1}^{n+1} \lambda_i u^{(i)}$, 
\[
b - x = b -  \sum_{i=1}^{n+1} \lambda_i u^{(i)} =  \sum_{i=1}^{n+1} \lambda_i (b-u^{(i)}),\quad\text{ and }
\]
\[
\pi_{pwl}(x) + \pi_{pwl}(b-x) = \sum_{i = 1}^{n+1} \lambda_i \pi_{pwl}(u^{(i)}) + \sum_{i = 1}^{n+1} \lambda_i \pi_{pwl}(b - u^{(i)}) = \sum_{i=1}^{n+1} \lambda_i (\pi(u^{(i)}) + \pi(b-u^{(i)}) ) = 1.
\]  
Hence, $\pi_{pwl}$ satisfies condition (C2).

(v) The periodicity of $\pi_{pwl}$ follows from the periodicity of $\pi$ and the piecewise linear structure of $\pi_{pwl}$ with cell complex $\Tcal_{\delta_1}$.  

(vi) Note that $[0,1]^n$ consists of the union of only finitely many maximal $T \in \Tcal_{\delta_1}$ for fixed $\delta_1 > 0$, where $\pi_{pwl}$ is affine on each such $T$.  Hence, the gradient (when it exists) of $\pi_{pwl}$ on such $T \in \Tcal_{\delta_1}$ can only be equal to one of only finitely many different vectors, which we list as $a^{(1)}, \dots, a^{(N)}$.  Thus, $\pi_{pwl}$ is Lipschitz continuous on $[0,1]^n$, with Lipschitz constant $L := \max_{i=1}^N \|a^{(i)}\|_1$ with respect to the $\infty$-norm via~\cite[Proposition 2.2.7]{scholtes2012introduction}.  The Lipschitz continuity of $\pi_{pwl}$ on $\mathbb R^n$ follows from (v).
\end{proof}
%
%
%
%
We now construct $\pi_{adjust}$, by flattening the piecewise linear function $\pi_{pwl}$ (c.f.~Lemma~\ref{lem:pwl}) in a tiny $\infty$-norm neighborhood of each point in $(b+\mathbb Z^n)\cup \mathbb Z^n$.  
In particular, in each $\infty$-norm neighborhood of $z \in \mathbb Z^n$, we will have $\pi_{adjust} = 0$, and in each $\infty$-norm neighborhood of $b+z \in b+\mathbb Z^n$, we will have $\pi_{adjust} = 1$.   
For $\epsilon > 0$, $b \in ([0,1)^n \cap \mathbb Q^n)\setminus\{0\}$, and some continuous, minimal $\pi: \mathbb R^n \rightarrow \mathbb R$, let $\pi_{pwl}$ be a function that satisfies the conclusions of Lemma~\ref{lem:pwl} for some suitable $\delta_1 > 0$ and Lipschitz constant $L$ (with respect to the $\infty$-norm).  
Suppose that $\delta_2 \in \mathbb R$ satisfies 
\begin{equation}\label{eqn:delta_12_1} 
\delta_2 < \min \left\{\frac{1}{4Ln}, \frac{\epsilon}{2L}\right\} \quad \text{and} \quad
 \delta_2 = \frac{\delta_1}{q} \text{ for some } q \in \mathbb N,
\end{equation}
so that $\mathbb Z^n \subseteq U_{\delta_1} \subseteq U_{\delta_2}$ and $b \in U_{\delta_1} \subseteq U_{\delta_2}$.  Furthermore assume that $\delta_2 > 0$ is small enough so that
\begin{equation}\label{eqn:delta_12_2} 
\bar B_{2\delta_2}^\infty(z^{(1)}) \cap \bar B_{2\delta_2}^\infty(b-z^{(2)})   = \emptyset \text{ for all } z^{(1)}, z^{(2)} \in \mathbb Z^n.
\end{equation}
%
Define $\pi_{adjust}:\mathbb R^n \rightarrow \mathbb R$, a piecewise linear function with cell complex $\Tcal_{\delta_2}$, where, for each $u \in U_{\delta_2}$, 
\begin{equation}\label{eqn:pi_adjust}
\pi_{adjust}(u) =
\begin{cases}
0			 	& \text{if }  u \in \bar B_{\delta_2}^\infty(z) \text{ for some }  z \in \mathbb Z^n,\\
1				& \text{if }  u \in \bar B_{\delta_2}^\infty(b-z) \text{ for some }  z \in \mathbb Z^n,\\
\pi_{pwl}(u)	& \text{otherwise},
\end{cases}
\end{equation}
and the values of $\pi_{adjust}$ on $\R^n\setminus U_{\delta_2}$ are defined via linear interpolation. We note that $\pi_{adjust}$ is well-defined via~(\ref{eqn:delta_12_2}).  The next lemma describes several desirable properties of $\pi_{adjust}$.
%
%
\begin{lemma}\label{lem:pi_adjust}  
Let $b \in ([0,1)^n \cap \mathbb Q^n)\setminus\{0\}$, $\epsilon > 0$, and fix a continuous,  minimal function $\pi:\mathbb R^n \rightarrow \mathbb R$.  For suitable $\delta_1 > 0$ (c.f.~Lemma~\ref{lem:pwl}), let $\pi_{pwl}:\mathbb R^n \rightarrow \mathbb R$ be any function with Lipschitz constant $L$ (with respect to the $\infty$-norm) that satisfies the conclusions of Lemma~\ref{lem:pwl}.  
Consider the function $\pi_{adjust}$ given by~(\ref{eqn:pi_adjust}) for some appropriately chosen $\delta_2 > 0$ (c.f.~(\ref{eqn:delta_12_1}),~(\ref{eqn:delta_12_2})).  
The following items hold:   
(i)  $\pi_{adjust}$ satisfies conditions (C1) and (C2); 
(ii) $\pi_{adjust}$ is Lipschitz continuous with Lipschitz constant $4Ln$ (with respect to the $\infty$-norm); 
(iii) $\|\pi_{pwl} - \pi_{adjust} \|_\infty < \epsilon$;  
(iv) $(\pi_{adjust} + 6\epsilon)$ is subadditive; and
(v) $\pi_{adjust}$ is periodic with respect to $\mathbb Z^n$.
\end{lemma}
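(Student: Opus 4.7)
The plan is to verify each of the five items in turn, leveraging the conclusions of Lemma~\ref{lem:pwl} for $\pi_{pwl}$ together with the disjointness condition~(\ref{eqn:delta_12_2}) and the smallness condition~(\ref{eqn:delta_12_1}) on $\delta_2$. Three standing observations will be used repeatedly: (a)~since $\pi_{pwl}$ satisfies (C1) and (C2) (by Lemma~\ref{lem:pwl}(iv)), $\pi_{pwl}(z)=0$ and $\pi_{pwl}(b-z)=1$ at every $z\in\mathbb Z^n$, so Lipschitz continuity of $\pi_{pwl}$ yields $|\pi_{pwl}(u)|\le L\delta_2$ for $u\in\bar B_{\delta_2}^\infty(z)$ and $|\pi_{pwl}(u)-1|\le L\delta_2$ for $u\in\bar B_{\delta_2}^\infty(b-z)$; (b)~because $\delta_1=q\delta_2$, the complex $\Tcal_{\delta_2}$ refines $\Tcal_{\delta_1}$, so $\pi_{pwl}$ is itself piecewise linear on $\Tcal_{\delta_2}$; (c)~by~(\ref{eqn:delta_12_2}), two vertices of the same maximal simplex of $\Tcal_{\delta_2}$ cannot lie in balls of different types (one around $\mathbb Z^n$, one around $b+\mathbb Z^n$).

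For (i), (C1) is immediate: $\pi_{adjust}(z)=0$ at integer points, and the three possible vertex values $0$, $1$, $\pi_{pwl}(u)\ge 0$ are all nonnegative, so the linear interpolant is nonnegative. For (C2), I would first verify $\pi_{adjust}(u)+\pi_{adjust}(b-u)=1$ at every $u\in U_{\delta_2}$ by case analysis: if $u\in\bar B_{\delta_2}^\infty(z)$ then $b-u\in\bar B_{\delta_2}^\infty(b-z)$ and the identity reads $0+1=1$; the symmetric case is analogous; and in the remaining case, (\ref{eqn:delta_12_2}) forces $b-u$ into the same "otherwise" branch, so the identity reduces to (C2) for $\pi_{pwl}$. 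Since $b\in U_{\delta_2}$, any maximal simplex $T$ has $b-T$ also in $\Tcal_{\delta_2}$ with vertices $b-u^{(i)}$, and the identity then propagates to arbitrary $x\in T$ by writing $x$ and $b-x$ as matching convex combinations. Item (v) is immediate from $\mathbb Z^n\subseteq U_{\delta_2}$ and the $\mathbb Z^n$-invariance of the definition of $\pi_{adjust}$ on $U_{\delta_2}$.

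For (ii), I would bound the $\ell_1$-norm of the gradient $a^T$ of $\pi_{adjust}$ on each maximal simplex $T\in\Tcal_{\delta_2}$ by walking along the Kuhn ordering of vertices $u^{(0)},\dots,u^{(n)}$ with $u^{(k)}-u^{(k-1)}=\delta_2 e^{(i_k)}$. Observation~(c) rules out the $0$/$1$ adjacency, and in every remaining case observation~(a) gives $|\pi_{adjust}(u^{(k)})-\pi_{adjust}(u^{(k-1)})|\le 2L\delta_2$, so $|a^T_{i_k}|\le 2L$ and $\|a^T\|_1\le 2Ln\le 4Ln$. By the standard characterization of the $\ell_\infty$-Lipschitz constant of a piecewise linear function as the supremum of $\|a^T\|_1$ (c.f.~\cite[Proposition~2.2.7]{scholtes2012introduction}), $\pi_{adjust}$ is Lipschitz with constant $4Ln$.

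Items (iii) and (iv) fall out at the end. Since $\pi_{pwl}-\pi_{adjust}$ is piecewise linear on $\Tcal_{\delta_2}$, it attains its sup norm at some vertex of $U_{\delta_2}$; by observation~(a) the vertex values of $|\pi_{pwl}-\pi_{adjust}|$ are at most $L\delta_2<\epsilon/2$, proving (iii) with strict inequality. Then (iv) follows by a three-$\epsilon$ chase using Lemma~\ref{lem:pwl}(iii): for any $x,y$,
\[
\Delta\pi_{adjust}(x,y)\ \ge\ \Delta\pi_{pwl}(x,y)-3\epsilon\ \ge\ -3\epsilon-3\epsilon\ =\ -6\epsilon,
\]
so $(\pi_{adjust}+6\epsilon)$ is subadditive. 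The main technical obstacle is the Lipschitz bound in (ii): it is essential that the disjointness condition~(\ref{eqn:delta_12_2}) be exploited to forbid $0$/$1$ vertex pairs within a single simplex, since otherwise a single Kuhn step could cause a unit jump in $\pi_{adjust}$ and the gradient would blow up as $\delta_2\to 0$.
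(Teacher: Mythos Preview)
Your proof is correct and follows essentially the same route as the paper: verify (C1), (C2), and periodicity directly from the vertex definition and interpolation, bound the gradient on each Kuhn simplex to get Lipschitz continuity (you use consecutive vertex differences, the paper inverts the lower-triangular system---same computation, and your bound $2Ln$ is in fact sharper than the stated $4Ln$), and deduce (iii) and (iv) by comparing with $\pi_{pwl}$ and chaining the $\epsilon$'s. The only point worth a remark is your standing observation~(b): the fact that $\Tcal_{\delta_2}$ refines $\Tcal_{\delta_1}$ when $\delta_2=\delta_1/q$ is a known property of the Coxeter--Freudenthal--Kuhn triangulation but is not entirely trivial, so a one-line justification or reference would strengthen your argument for~(iii); the paper sidesteps this by arguing region-by-region rather than reducing to vertices.
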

%
%
\begin{proof}

(i)  It is not hard to see that $\pi_{adjust}$ satisfies (C1), as $\pi_{adjust}(u) \geq 0$ for all $u \in U_{\delta_2}$, the linear interpolant of any nonnegative function is nonnegative, and $\pi_{adjust}(z) = 0$ for all $z \in \mathbb Z^n$.  To see that $\pi_{adjust}$ satisfies (C2), note that $\pi_{adjust}(u) + \pi_{adjust}(b-u) = 1$ for all $u \in U_{\delta_2}$, as $\pi_{pwl}$ satisfies (C2), and then apply an argument similar to that used in the proof of (iv) in Lemma~\ref{lem:pwl}.

(ii)  We next show that $\pi_{adjust}$ is Lipschitz continuous.  Fix $u \in U_{\delta_2}$ and $(i_1,\dots,i_n) \in \Sigma_n$, so 
\[
T := \text{ conv}\Big(\Big\{u,u+\delta_2 e^{(i_1)},\dots,u+\delta_2 \sum_{j=1}^n e^{(i_j)} \Big\}\Big) 
\]
is an arbitrary maximal cell of $\Tcal_{\delta_2}$.  
Since $\pi_{adjust}$ is affine on $T$, there exists $a_T \in \mathbb R^n$ and $\delta_T \in \mathbb R$ such that $\pi_{adjust}(x) = \langle  a_T, x \rangle + \delta_T$ for all $x \in T$.  In particular, $a_T$ is the unique vector satisfying
\[
\pi_{adjust}\Big(u + \delta_2 \sum_{j=1}^k e^{(i_j)} \Big) - \pi_{adjust}(u) = \Big\langle a_T , \delta_2 \sum_{j=1}^k e^{(i_j)} \Big\rangle \quad \text{for all} \quad k \in [n].
\]
Defining $u^{(k)} := u + \delta_2 \sum_{j=1}^k e^{(i_j)}$ for each $k \in [n]$, we may rewrite the above display as
\[
\delta_2
\begin{bmatrix}
1 &    && \\
1 & 1 & & \\
\vdots & \vdots &\ddots& \\
1 & 1  & \dots&1
\end{bmatrix}
 \left[\begin{array}{c c c  c c}
&& e^{(i_1)} &&\\
\hline
& & e^{(i_2)} &&\\
\hline
& &\vdots &\\
\hline
& & e^{(i_n)}&&
\end{array}\right]
 a_T =
\begin{bmatrix}
\pi_{adjust}(u^{(1)}) - \pi_{adjust}(u)\\
\pi_{adjust}(u^{(2)}) - \pi_{adjust}(u)\\
\vdots\\
\pi_{adjust}(u^{(n)}) - \pi_{adjust}(u)
\end{bmatrix}.
\]
Hence,
\[
a_T = \delta^{-1}_2  
\left[\begin{array}{c| c| c|  c}
&&&\\
&&&\\
e^{(i_1)} & e^{(i_2)} & \dots & e^{(i_n)}\\
&&&\\
&&&
\end{array}\right]
\begin{bmatrix}
1 &    && \\
-1 & 1 & & \\
 & \ddots &\ddots& \\
 &   &-1 &1
\end{bmatrix}
\begin{bmatrix}
\pi_{adjust}(u^{(1)}) - \pi_{adjust}(u)\\
\pi_{adjust}(u^{(2)}) - \pi_{adjust}(u)\\
\vdots\\
\pi_{adjust}(u^{(n)}) - \pi_{adjust}(u)
\end{bmatrix},
\]
which implies
\begin{equation}\label{eqn:a_T_bound}
\| a_T \|_1 \leq 2\, n\,  \delta^{-1}_2  \max_{i=1}^n |\pi_{adjust}(u^{(i)}) - \pi_{adjust}(u)|.
\end{equation}
Now, if $u, u^{(i)} \notin \bar B_{\delta_2}^\infty(z) \cup \bar B_{\delta_2}^\infty(b-z)$ for all $z \in \mathbb Z^n$, then 
\begin{equation}\label{eqn:pi_adj_Lip1}
|\pi_{adjust}(u^{(i)}) - \pi_{adjust}(u)| = |\pi_{pwl}(u^{(i)}) - \pi_{pwl}(u)| \leq L \delta_2,
\end{equation}
by (vi) of Lemma~\ref{lem:pwl}.
Otherwise, at least one of $u$, $u^{(i)}$ belongs to $ \bar B_{\delta_2}^\infty(z)$ or $ \bar B_{\delta_2}^\infty(b-z)$ for some $z \in \mathbb Z^n$.  

Suppose that at least one of $u$ or $u^{(i)} \in \bar B_{\delta_2}^\infty(z)$ so that $\pi_{adjust}(u) =0 $ or $\pi_{adjust}(u^{(i)})=0 $.  In this case, we must have $u, u^{(i)} \in \bar B_{2\delta_2}^\infty(z)$ as $u$ and $u^{(i)}$ are vertices of the same simplex $T \in \Tcal_{\delta_2}$.  This leads to
\begin{equation}\label{eqn:pi_adj_Lip2}
|\pi_{adjust}(u^{(i)}) - \pi_{adjust}(u)|  \leq  \max_{w \in U_{\delta_2} \cap \bar B_{2\delta_2}^\infty (z) }\pi_{adjust}(w) \leq \max_{w \in U_{\delta_2} \cap \bar B_{2\delta_2}^\infty (z) }\pi_{pwl}(w) \leq 2L\delta_2, 
\end{equation}
by~(\ref{eqn:delta_12_2}),~(\ref{eqn:pi_adjust}), and conclusions (iv) and (vi) of Lemma~\ref{lem:pwl}.

Alternatively, suppose that at least one of $u$ or $u^{(i)} \in \bar B_{\delta_2}^\infty(b-z)$ so that $\pi_{adjust}(u) = 1$ or $\pi_{adjust}(u^{(i)}) = 1$.  In this case, we must have $u, u^{(i)} \in \bar B_{2\delta_2}^\infty(b-z)$ as, again, $u$ and $u^{(i)}$ are vertices of the same simplex $T \in \Tcal_{\delta_2}$.  We then obtain 
\begin{align}
|\pi_{adjust}(u^{(i)}) - \pi_{adjust}(u)|  &\leq  \max_{w \in U_{\delta_2} \cap \bar B_{2\delta_2}^\infty (b-z) }  | 1 - \pi_{pwl}(w)| \label{eqn:pi_adj_Lip3} \\
&=   \max_{w \in U_{\delta_2} \cap \bar B_{2\delta_2}^\infty (b-z)} | \pi_{pwl}(b-z) - \pi_{pwl}(w)| \leq 2L\delta_2,\notag
\end{align}
via~(\ref{eqn:delta_12_2}),~(\ref{eqn:pi_adjust}), and conclusions (iv) and (vi) of Lemma~\ref{lem:pwl}.

Combining equations~(\ref{eqn:a_T_bound})-(\ref{eqn:pi_adj_Lip3}), we have that $\|a_T\|_1 \leq 4Ln$ for all $T \in \Tcal_{\delta_2}$.  Hence $\pi_{adjust}$ is Lipschitz, with Lipschitz constant $4Ln$, with respect to the $\infty$-norm (c.f.~\cite[Proposition 2.2.7]{scholtes2012introduction}).  

(iii)
%
If $x \notin \bigcup_{z \in \mathbb Z^n} \bar B^\infty_{2\delta_2}(z)$ and $x \notin \bigcup_{z \in \mathbb Z^n} \bar B^\infty_{2\delta_2}(b-z)$, then $x \in T$ for some maximal $T \in \Tcal_{\delta_2}$ whose vertices $u^{(1)}, \dots, u^{(n+1)}$ satisfy $\| u^{(i)} - z \|_\infty > \delta_2$ and $\| u^{(i)} - b+ z \|_\infty > \delta_2$ for all $i \in [n+1]$ and $z \in \mathbb Z^n$. 
Hence, \[\pi_{adjust}(u^{(i)}) - \pi_{pwl}(u^{(i)}) = 0\] for all such $u^{(i)}$, so that
$|\pi_{adjust}(x) - \pi_{pwl}(x)| = 0$, as  $(\pi_{adjust} - \pi_{pwl})$ is affine on $T$.  

Consider then $x \in  \bar B^\infty_{2 \delta_2}(z)$ for some $z \in \mathbb Z^n$.  By the Lipschitz continuity of $\pi_{pwl}$, the fact that $0 \leq \pi_{adjust} \leq \pi_{pwl}$ on each $\bar B^\infty_{2\delta_2}(z)$ (via equation~(\ref{eqn:pi_adjust})), and~(\ref{eqn:delta_12_1}), we have that
\[
|\pi_{pwl}(x) - \pi_{adjust}(x)| \leq \pi_{pwl}(x)  \leq 2L\delta_2 < \epsilon.
\]
A similar argument demonstrates that $|\pi_{pwl}(x) - \pi_{adjust}(x)| < \epsilon$ for all $x \in \bigcup_{z \in \mathbb Z^n} \bar B^\infty_{2\delta_2}(b-z)$.  Thus, $|\pi_{pwl}(x) - \pi_{adjust}(x) | < \epsilon$ for all $x \in \mathbb R^n$, and (iii) is satisfied.  

(iv) Conclusion (iv) follows from conclusion (iii), conclusion (iii) of Lemma~\ref{lem:pwl}, and an argument similar to that used in equation~(\ref{eqn:Delta_pi_pwl}).  

(v)  The periodicity of $\pi_{adjust}$ follows from~(\ref{eqn:pi_adjust}), the piecewise linear structure of $\pi_{adjust}$, and the periodicity of $\pi_{pwl}$.
This completes the proof.
\end{proof}
%
%
We conclude this section by combining Lemmas~\ref{lem:pwl} and~\ref{lem:pi_adjust} in the following proposition. 
%
%
\begin{proposition}\label{prop:pwl}
Suppose that $b \in ([0,1)^n \cap \mathbb Q^n)\setminus\{0\}$ and $\pi:\mathbb R^n \rightarrow \mathbb R$ is a continuous,  minimal function.  Then for each $\epsilon >0$, there exists $\wt \pi:\mathbb R^n \rightarrow \mathbb R$ and $\wt\delta > 0$ sufficiently small such that each of the following statements are satisfied:
(i)     $\wt \pi$ is a piecewise linear function with cell complex $\Tcal_{\wt\delta}$; 
(ii)    $\wt \pi$ is Lipschitz continuous; 
(iii)   $\| \pi - \wt \pi \|_\infty < \frac{\epsilon}{18}$; 
(iv)   $(\wt \pi + \frac{\epsilon}{6})$ is subadditive; 
(v)    $\wt \pi$ satisfies conditions (C1) and (C2);
(vi)   $\wt \pi$ is periodic with respect to $\mathbb Z^n$; 
(vii)  $\wt \pi(x) = 0$ for all $x \in \bigcup_{z \in \mathbb Z^n} \bar B_{\wt\delta}^\infty(z)$; 
(viii) $\wt \pi(x) = 1$ for all $x \in \bigcup_{z \in \mathbb Z^n} \bar B_{\wt\delta}^\infty(b-z)$;
(ix)   $\bar B_{\tilde\delta}^\infty(z^{(1)}) \cap \bar B_{\tilde\delta}^\infty(b-z^{(2)})   = \emptyset \text{ for all } z^{(1)}, z^{(2)} \in \mathbb Z^n$.
\end{proposition}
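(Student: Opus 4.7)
The plan is to construct $\wt\pi$ as the function $\pi_{adjust}$ from Lemma~\ref{lem:pi_adjust}, applied to the output of Lemma~\ref{lem:pwl}, with the tolerance in both lemmas chosen small enough that the quantitative bounds line up with the conclusions of the proposition. Concretely, set $\epsilon_0 := \epsilon/36$. First invoke Lemma~\ref{lem:pwl} with tolerance $\epsilon_0$ to obtain a scale $\delta_1 > 0$ with $b \in U_{\delta_1}$, a Lipschitz constant $L$, and a piecewise linear function $\pi_{pwl}$ on $\Tcal_{\delta_1}$ satisfying all of the conclusions of that lemma. Next, choose $\delta_2$ satisfying~(\ref{eqn:delta_12_1}) and~(\ref{eqn:delta_12_2}); this is possible because~(\ref{eqn:delta_12_1}) is an explicit upper bound depending only on $L$, $\epsilon_0$, and $\delta_1$, while~(\ref{eqn:delta_12_2}) holds for all sufficiently small $\delta_2$ since $b \notin \mathbb Z^n$. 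Apply Lemma~\ref{lem:pi_adjust} with the same $\epsilon_0$ to obtain $\pi_{adjust}$, and then set $\wt\pi := \pi_{adjust}$ and $\wt\delta := \delta_2$.

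Conclusions (i), (ii), (v), and (vi) of the proposition are immediate restatements of the corresponding items in Lemma~\ref{lem:pi_adjust}. For (iii), the triangle inequality together with item (ii) of Lemma~\ref{lem:pwl} and item (iii) of Lemma~\ref{lem:pi_adjust} gives
\[
\|\pi - \wt\pi\|_\infty \leq \|\pi - \pi_{pwl}\|_\infty + \|\pi_{pwl} - \pi_{adjust}\|_\infty < 2\epsilon_0 = \frac{\epsilon}{18}.
\]
For (iv), item (iv) of Lemma~\ref{lem:pi_adjust} asserts that $\pi_{adjust} + 6\epsilon_0 = \wt\pi + \tfrac{\epsilon}{6}$ is subadditive. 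Condition (ix) is strictly weaker than~(\ref{eqn:delta_12_2}) and so holds immediately with $\wt\delta = \delta_2$.

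The one item requiring a short extra argument is (vii) (and by symmetry (viii)), which I expect to be the only mildly subtle step: the definition~(\ref{eqn:pi_adjust}) forces $\pi_{adjust}$ to vanish only at the vertices of $\Tcal_{\delta_2}$ inside $\bar B_{\delta_2}^\infty(z)$, whereas (vii) demands this on the entire ball. To bridge the gap, I would observe that since $z \in \mathbb Z^n \subseteq U_{\delta_2}$, the hypercube $\bar B_{\delta_2}^\infty(z)$ is exactly the union of the $2^n$ axis-aligned subcubes of side $\delta_2$ with $z$ as a vertex, and each of these subcubes is itself the union of $n!$ simplices of the Coxeter-Freudenthal-Kuhn complex $\Tcal_{\delta_2}$. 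Every vertex of every such simplex lies in $\bar B_{\delta_2}^\infty(z) \cap U_{\delta_2}$, hence has $\pi_{adjust} = 0$ by~(\ref{eqn:pi_adjust}), and the piecewise linear structure of $\wt\pi$ then forces $\wt\pi \equiv 0$ on the whole ball. The proof of (viii) is identical, using $b - z \in U_{\delta_2}$ and the fact that each vertex in $\bar B_{\delta_2}^\infty(b-z) \cap U_{\delta_2}$ has $\pi_{adjust}$-value $1$; here~(\ref{eqn:delta_12_2}) rules out conflicts with the ``$0$''-balls. Aside from this coverage observation, the remainder of the proof is pure bookkeeping on constants.
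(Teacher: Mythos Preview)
Your proposal is correct and follows essentially the same route as the paper: apply Lemma~\ref{lem:pwl} and then Lemma~\ref{lem:pi_adjust} with tolerance $\epsilon/36$, set $\wt\pi=\pi_{adjust}$ and $\wt\delta=\delta_2$, and read off the conclusions. Your explicit argument for (vii) and (viii)---that $\bar B_{\delta_2}^\infty(z)$ is a union of maximal simplices of $\Tcal_{\delta_2}$ all of whose vertices lie in $U_{\delta_2}\cap\bar B_{\delta_2}^\infty(z)$---is a welcome elaboration of what the paper dismisses with a bare reference to~(\ref{eqn:pi_adjust}).
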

%
%
\begin{proof}
Fix $\epsilon > 0$.  For $\frac{\epsilon}{36} >0$, there exists $\delta_1 > 0$ (with $b \in U_{\delta_1}$, $\mathbb Z^n \subseteq U_{\delta_1}$) and  piecewise linear $\pi_{pwl}:\mathbb R^n \rightarrow \mathbb R$ with cell complex $\Tcal_{\delta_1}$ that satisfies the conclusions of Lemma~\ref{lem:pwl}, with $\epsilon$ replaced by $\frac{\epsilon}{36}$.  Choose $\delta_2$ sufficiently small to fulfill~(\ref{eqn:delta_12_1}) and~(\ref{eqn:delta_12_2}), with $L$ denoting the Lipschitz constant of $\pi_{pwl}$ with respect to the $\infty$-norm and $\epsilon$ replaced by $\frac{\epsilon}{36}$.  Consider $\pi_{adjust}:\mathbb R^n \rightarrow \mathbb R$ given by~(\ref{eqn:pi_adjust}).  
Applying Lemma~\ref{lem:pi_adjust} and the triangle inequality yields that $\wt \pi \equiv \pi_{adjust}$ satisfies conclusions (i)-(vi) with $\wt\delta = \delta_2$; furthermore, $\wt\pi$ satisfies (vii) and (viii) by~(\ref{eqn:pi_adjust}). (ix) follows from~\eqref{eqn:delta_12_2}.
\end{proof}

%
%
\section{A Subadditivity Inducing Perturbation}\label{sec:perturb}
%
%
The main result of this section is Proposition~\ref{prop:pi_comb}, which establishes several properties of a piecewise linear minimal function $\pi_{comb}$; the function $\pi_{comb}$ approximates the continuous minimal function $\pi$ and enjoys an additional strict subadditivity attribute.  

For this purpose, we consider $\wt \pi:\mathbb R^n \rightarrow \mathbb R$, any piecewise linear approximation of $\pi$ that meets the conclusions of Proposition~\ref{prop:pwl} for suitable $\wt \delta > 0$.  
We then perturb $\wt \pi$ in order to form $\pi_{comb}$ (c.f.~(\ref{eqn:pi_comb})), another approximation of $\pi$.  The perturbation is carried out in a way such that not only is $\pi_{comb}$ subadditive, but $\Delta \pi_{comb}(x,y) > 0$ for ``most" pairs $(x,y) \in \mathbb R^n \times \mathbb R^n$.  
Subsequent modifications of $\pi_{comb}$ may decrease the value of  $\Delta \pi_{comb}(x,y)$ for certain $(x,y)$.  To maintain subadditivity, it is therefore advantageous for $\Delta \pi_{comb}(x,y)$ to be strictly positive whenever possible.

The perturbation is also performed such that the gradient (when it exists) of $\pi_{comb}$ near each point $z \in \mathbb Z^n$ takes only one of $(n+1)$ possible different values in $\mathbb R^n$.  This feature of $\pi_{comb}$ will prove critical when we perform the $(n+1)$-slope fill-in procedure (c.f.~(\ref{eqn:fill_in})) and establish related results in Section~\ref{sec:sym_fill_in}. 
%
%
\paragraph{An important gauge function.}
%
%
In what follows, we will perturb $\wt \pi$ in the direction of $\pi_{\delta_3}$ (c.f.~(\ref{eqn:pi_delta})).  
One important ingredient in the construction of $\pi_{\delta_3}$ is the gauge function $\gamma_{\delta_3}$ (c.f.~(\ref{eqn:gamma_delta})) of a carefully constructed simplex.
This gauge function will also be employed in the $(n+1)$-slope fill-in procedure of Section~\ref{sec:sym_fill_in}. 

Let $b \in ([0,1)^n \cap \mathbb Q^n)\setminus\{0\}$.  Fix $\delta_3 > 0$ and define the vectors
\begin{equation}\label{eqn:vi_s}
v^{(i)}_{\delta_3} :=  \delta_3(b-\onebld+ne^{(i)}) \in \mathbb R^n \; \text{ for } \; i \in [n]   \quad \text{and} \quad    v_{\delta_3}^{(n+1)} := \delta_3(b- \onebld) \in \mathbb R^n,
\end{equation}
as well as the simplex
\begin{equation}\label{eqn:S_simplices} 
 \Lambda_{\delta_3,z} \; := z + \text{conv}(\{ v_{\delta_3}^{(i)}\}_{i=1}^{n+1} ).
\end{equation}
Note that $\big(\frac{\sum_{i=1}^n b_i}{n}\big)v_{\delta_3}^{(n+1)} + \sum_{i=1}^n (\frac{1-b_i}{n})v^{(i)}_{\delta_3} = 0$, each $b_i <1$ since $b \in [0,1)^n$, and $\sum_i b_i > 0$ since $b \neq 0$. Thus, we have that $z \in \intr(\Lambda_{\delta_3,z})$, and the Minkowski gauge function $\gamma_{\delta_3}: \mathbb R^n \rightarrow \mathbb R$ given by 
\begin{equation}\label{eqn:gamma_delta}
\gamma_{\delta_3}(x) := \inf_{t>0} \left\{ t\,|\, x/t \in \Lambda_{\delta_3,0}\right\}
\end{equation}
is well-defined.
Finally, for $i \in [n]$, define $g^{(i)}_{\delta_3} := -\left(\delta_3(1-b_i)\right)^{-1}e^{(i)}$ and $g^{(n+1)}_{\delta_3} := (\delta_3\sum_{j=1}^n b_j)^{-1}\onebld$.  The following lemma gives us an alternative formulation of $\gamma_{\delta_3}$.
%
%
\begin{lemma}\label{lem:gamma_delta}
Fix $b \in ([0,1)^n \cap \mathbb Q^n)\setminus\{0\}$ and $\delta_3 > 0$.  
The gauge function $\gamma_{\delta_3}$ given by~(\ref{eqn:gamma_delta}) can be written as
\begin{equation}\label{eqn:gauge}
\gamma_{\delta_3}(x) = \max_{i=1}^{n+1} \, \langle g^{(i)}_{\delta_3},x \rangle.
\end{equation}
Furthermore, $\gamma_{\delta_3}$ is sublinear, i.e., $\gamma_{\delta_3}$ is subadditive and $\gamma_{\delta_3}(tx) = t\gamma_{\delta_3}(x)$ for all $t \in \mathbb R_+,x \in \mathbb R^n$.
\end{lemma}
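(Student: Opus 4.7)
The plan is to convert the vertex description $\Lambda_{\delta_3,0} = \text{conv}\{v_{\delta_3}^{(i)} : i \in [n+1]\}$ into a half-space description whose facet normals are precisely the $g_{\delta_3}^{(i)}$'s, and then read the gauge directly off this H-representation. Concretely, I would first verify the inner-product identities $\langle g_{\delta_3}^{(k)}, v_{\delta_3}^{(i)}\rangle = 1$ for every $i \neq k$ in $[n+1]$, together with $\langle g_{\delta_3}^{(k)}, v_{\delta_3}^{(k)}\rangle < 1$ (in fact, strictly negative). Both follow by direct computation from the definitions in~\eqref{eqn:vi_s}, splitting into the two cases $k \in [n]$ and $k = n+1$ and using that $\onebld - b \in (0,1]^n$ and $\sum_j b_j > 0$.

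These relations say that the $n$ vertices $\{v_{\delta_3}^{(i)} : i \neq k\}$ lie on the hyperplane $H_k := \{x : \langle g_{\delta_3}^{(k)}, x\rangle = 1\}$, while $v_{\delta_3}^{(k)}$ lies strictly on the side $\{x : \langle g_{\delta_3}^{(k)}, x\rangle < 1\}$. Hence $H_k$ carries the facet of $\Lambda_{\delta_3,0}$ opposite $v_{\delta_3}^{(k)}$, and $g_{\delta_3}^{(k)}$ is an outer normal to this facet, correctly scaled so that the facet sits in $\{x : \langle g_{\delta_3}^{(k)}, x\rangle = 1\}$. Since a simplex in $\mathbb R^n$ has exactly $n+1$ facets, the standard passage between V- and H-representations of a simplex yields
\[
\Lambda_{\delta_3,0} = \{x \in \mathbb R^n : \langle g_{\delta_3}^{(k)}, x\rangle \leq 1 \text{ for all } k \in [n+1]\}.
\]

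For the gauge formula, the equivalence $x/t \in \Lambda_{\delta_3,0} \iff \max_k \langle g_{\delta_3}^{(k)}, x\rangle \leq t$, valid for $t > 0$, is now immediate. Since $0 \in \intr(\Lambda_{\delta_3,0})$ and $\Lambda_{\delta_3,0}$ is bounded, for any $x \neq 0$ the ray from $0$ through $x$ must exit $\Lambda_{\delta_3,0}$ through some facet, which forces $\max_k \langle g_{\delta_3}^{(k)}, x\rangle > 0$ (the case $x = 0$ being trivial). Taking the infimum over $t > 0$ in~\eqref{eqn:gamma_delta} therefore gives $\gamma_{\delta_3}(x) = \max_k \langle g_{\delta_3}^{(k)}, x\rangle$, i.e.,~\eqref{eqn:gauge}. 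Sublinearity is then automatic, as any pointwise maximum of a finite family of linear functionals is positively homogeneous and subadditive. The only real obstacle is the bookkeeping in the vertex-normal evaluation; once that algebra is done, everything else is routine.
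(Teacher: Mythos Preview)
Your argument is correct and complete; the inner-product identities check out exactly as you claim, and once the H-representation $\Lambda_{\delta_3,0} = \{x : \langle g_{\delta_3}^{(k)}, x\rangle \leq 1,\ k \in [n+1]\}$ is in hand, the gauge formula and sublinearity drop out immediately.

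The route differs from the paper's. You go \emph{primal}: from the V-description of $\Lambda_{\delta_3,0}$ you pass directly to its H-description by verifying that each $g_{\delta_3}^{(k)}$ is the correctly scaled outer normal to the facet opposite $v_{\delta_3}^{(k)}$, and then read the gauge off the half-space system. The paper instead goes \emph{dual}: it computes the polar $\Lambda_{\delta_3,0}^\circ$ (whose H-description is immediate from the vertices of $\Lambda_{\delta_3,0}$), identifies its vertices as the $g_{\delta_3}^{(k)}$, and then invokes the standard identity from~\cite[Theorem~14.5]{rock} that the gauge of a closed convex set containing the origin equals the support function of its polar. Your approach is more self-contained, avoiding the appeal to polarity theory; the paper's approach trades a small amount of external machinery for not having to argue separately that $\max_k \langle g_{\delta_3}^{(k)}, x\rangle > 0$ for $x \neq 0$, since the support-function identity handles all cases uniformly. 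The underlying linear algebra---solving for where $n$ of the $n+1$ constraints are tight---is the same in both.
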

%
%
\begin{proof}
Consider the polar of the closed convex set $\Lambda_{\delta_3,0}$, given by
\[
\Lambda_{\delta_3,0}^\circ := \{ x \in \mathbb R^n \,|\, \langle x,v \rangle \leq 1 \text{ for all } v \in \Lambda_{\delta_3,0}\} 
\]
(c.f.~\cite[Ch.~14]{rock}).  Note that $x \in \Lambda_{\delta_3,0}^\circ$ if and only if $\langle x, v_{\delta_3}^{(i)} \rangle \leq 1$ for all $i \in [n+1]$, i.e., $\langle x, v \rangle \leq 1$ for all extreme points $v$ of $\Lambda_{\delta_3,0}$.  Thus, 
\[
x \in \Lambda_{\delta_3,0}^\circ  \iff \langle x , v_{\delta_3}^{(i)} \rangle \leq 1 \text{ for all } i \in [n+1]  \iff \begin{cases}\, \sum_{j=1}^n (b_j-1) x_j +n x_i \leq \delta_3^{-1} \text{ for all } i \in [n]\\  \sum_{j=1}^n (b_j-1) x_j \leq \delta_3^{-1}.\end{cases}
\]
Hence, $\Lambda_{\delta_3,0}^\circ$ is a compact, convex set, and the vertices of $\Lambda_{\delta_3,0}^\circ$, which can be found when exactly $n$ of the $(n+1)$ of the inequalities in the above display are equalities, are $g_{\delta_3}^{(1)}, \dots, g_{\delta_3}^{(n+1)}$.  

Now, since $\Lambda_{\delta_3,0}$ is a closed, convex set containing the origin, by~\cite[Theorem 14.5]{rock}, we have that the gauge function $\gamma_{\delta_3}$ is equal to the support function of  $\Lambda_{\delta_3,0}^\circ$, i.e., 
\[
\gamma_{\delta_3}(x) = \sup_{g \in \Lambda_{\delta_3,0}^\circ} \langle g, x \rangle = \max_{i=1}^{n+1} \, \langle g^{(i)}_{\delta_3},x \rangle,
\]
as the supremum in the above display will be obtained at one of the vertices of $\Lambda_{\delta_3,0}^\circ$.

The sublinearity of $\gamma_{\delta_3}$ follows immediately from \cite[Theorem 5.7]{guler2010foundations}.
\end{proof}
%
%
\subsection{The Perturbation Direction $\pi_{\delta_3}$}\label{subsec:pi_delta}
%
%
We now construct a minimal function $\pi_{\delta_3}$ using the gauge function $\gamma_{\delta_3}$.  We will perturb $\wt \pi$ (c.f.~Proposition~\ref{prop:pwl}) in the direction of $\pi_{\delta_3}$ in order to obtain $\pi_{comb}$ in the next subsection.

Fix $b \in ([0,1)^n \cap \mathbb Q^n)\setminus\{0\}$.  Choose $\delta_3 > 0$ such that $\delta_3 = p^{-1} < \frac{1}{2}$ for some $p \in \mathbb N$, $\Lambda_{2\delta_3,z^{(1)}} \cap \big( b-\Lambda_{2\delta_3,z^{(2)}} \big) = \emptyset$ for all $z^{(1)}, z^{(2)} \in \mathbb Z^n$, and $b \in U_{\delta_3}$.  
Define $\pi_{\delta_3}:\mathbb R^n \rightarrow \mathbb R$  such that 
\begin{equation}\label{eqn:pi_delta}
\pi_{\delta_3}(x) := 
\begin{cases}
    1/2\cdot\gamma_{\delta_3}(x-z)     & \text{if } x \in \Lambda_{\delta_3,z} \text{ for some } z \in \mathbb Z^n,\\
1 - 1/2\cdot\gamma_{\delta_3}(b-x-z) & \text{if } x \in b-\Lambda_{\delta_3,z} \text{ for some } z \in \mathbb Z^n,\\
\frac{1}{2}   & \text{otherwise},
\end{cases}
\end{equation}
which is well-defined given the choice of $\delta_3$.
%
%
The following lemma gives us the minimality of $\pi_{\delta_3}$, as well as some positive lower bounds on $\Delta \pi_{\delta_3}(x,y)$ for different sets of $(x,y) \in \mathbb R^n \times \mathbb R^n$.  This result will lead to lower bounds on $\Delta \pi_{comb}(x,y)$ is the next subsection.
%
%
\begin{lemma}\label{lem:pi_delta}
Let $b \in ([0,1)^n \cap \mathbb Q^n)\setminus\{0\}$ and $\delta_3 > 0$ such that $\delta_3 = p^{-1} < \frac{1}{2}$ for some $p \in \mathbb N$, $b \in U_{\delta_3}$, and $\Lambda_{2\delta_3,z^{(1)}} \cap \big( b-\Lambda_{2\delta_3,z^{(2)}} \big) = \emptyset$ for all $z^{(1)}, z^{(2)} \in \mathbb Z^n$. Then the function $\pi_{\delta_3}$ defined in~(\ref{eqn:pi_delta}) is piecewise linear and minimal.  Furthermore,
\begin{itemize}
\item[(i)] $\Delta \pi_{\delta_3}(x,y) \geq \frac{1}{2}$, if $x,y \notin \Lambda_{\delta_3,z}$, $x+y \notin b-\Lambda_{\delta_3,z} $ for all $z \in \mathbb Z^n$;
\item[(ii)] $\Delta \pi_{\delta_3}(x,y) \geq \frac{1}{2(n+1)\delta_3}\|x - z^{(1)}\|_\infty$, if $x \in \Lambda_{\delta_3,z^{(1)}}$ for some $z^{(1)} \in \mathbb Z^n$, but $y \notin \Lambda_{\delta_3,z}$, $x+y \notin b-\Lambda_{\delta_3,z}$ for all $z \in \mathbb Z^n$; 
\item[(iii)] $\Delta \pi_{\delta_3}(x,y) \geq \frac{1}{2(n+1)\delta_3}\|b-x-y- z^{(1)}\|_\infty$, if $x+y \in b-\Lambda_{\delta_3,z^{(1)}}$ for some $z^{(1)} \in \mathbb Z^n$, but $x,y \notin \Lambda_{\delta_3,z}$ for all $z \in \mathbb Z^n$.
\end{itemize} 
\end{lemma}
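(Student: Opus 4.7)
The plan is to prove the four assertions in the order: well-definedness and piecewise linearity, then conditions (C1) and (C2), then subadditivity (C3), and finally the three quantitative bounds (i)--(iii).

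First, I would check that $\pi_{\delta_3}$ is a well-defined continuous function. The simplices $\Lambda_{\delta_3, z}$ are contained in $\bar B_{n\delta_3}^\infty(z)$ (since each vertex $v_{\delta_3}^{(i)}$ has $\infty$-norm at most $n\delta_3$), and by the choice $\delta_3 < 1/2$ together with the hypothesis $\Lambda_{2\delta_3,z^{(1)}} \cap (b - \Lambda_{2\delta_3,z^{(2)}}) = \emptyset$, the three cases in~(\ref{eqn:pi_delta}) have pairwise disjoint interiors. On the boundary of $\Lambda_{\delta_3,z}$ one has $\gamma_{\delta_3}(x-z) = 1$, so $\pi_{\delta_3}(x) = 1/2$, matching the "otherwise" case; similarly on $\partial(b - \Lambda_{\delta_3,z})$. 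Piecewise linearity then follows from Lemma~\ref{lem:gamma_delta}: inside $\Lambda_{\delta_3,z}$, $\pi_{\delta_3}$ equals $\frac{1}{2}\max_i \langle g_{\delta_3}^{(i)}, x - z\rangle$, which is piecewise linear on the natural subdivision of $\Lambda_{\delta_3,z}$ into the $n+1$ regions where each $g_{\delta_3}^{(i)}$ attains the maximum, and analogously on $b - \Lambda_{\delta_3,z}$; outside, $\pi_{\delta_3}$ is the constant $1/2$.

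Next, (C1) follows because $z \in \Lambda_{\delta_3,z}$ gives $\pi_{\delta_3}(z) = \frac{1}{2}\gamma_{\delta_3}(0) = 0$, and nonnegativity is immediate from $\gamma_{\delta_3} \geq 0$ on $\Lambda_{\delta_3,0}$ (valid since $\Lambda_{\delta_3,0}$ contains the origin in its interior), combined with $\gamma_{\delta_3} \leq 1$ on $\Lambda_{\delta_3,0}$. For (C2), the map $x \mapsto b-x$ sends $\Lambda_{\delta_3,z}$ to $b - \Lambda_{\delta_3,z}$ and fixes the "otherwise" region setwise, and a direct substitution in each of the three cases yields $\pi_{\delta_3}(x) + \pi_{\delta_3}(b-x) = 1$.

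For (C3), I would partition $\R^n$ into the zones $Z_0 := \bigcup_z \Lambda_{\delta_3,z}$ (on which $\pi_{\delta_3} \leq 1/2$), $Z_b := \bigcup_z (b - \Lambda_{\delta_3,z})$ (on which $\pi_{\delta_3} \geq 1/2$), and $Z_{1/2}$ (on which $\pi_{\delta_3} = 1/2$), and do case analysis on the zones of $x,y,x+y$. The cases where $x,y \notin Z_0$ reduce to the trivial inequalities used for (i)--(iii) below. When $x \in \Lambda_{\delta_3,z^{(1)}}$ and $y \in \Lambda_{\delta_3,z^{(2)}}$, sublinearity of $\gamma_{\delta_3}$ (Lemma~\ref{lem:gamma_delta}) shows $(x - z^{(1)}) + (y - z^{(2)}) \in \Lambda_{2\delta_3,0}$, hence $x+y \in \Lambda_{2\delta_3,z^{(1)}+z^{(2)}}$; by the disjointness hypothesis this forces $x+y \notin Z_b$, and either $x+y \in \Lambda_{\delta_3,z^{(1)}+z^{(2)}}$ (in which case sublinearity of $\gamma_{\delta_3}$ gives $\Delta\pi_{\delta_3}(x,y) \geq 0$ directly) or $x+y \in Z_{1/2}$ (in which case $\gamma_{\delta_3}((x-z^{(1)}) + (y - z^{(2)})) > 1$ combines with subadditivity of $\gamma_{\delta_3}$ to give $\pi_{\delta_3}(x) + \pi_{\delta_3}(y) \geq 1/2 = \pi_{\delta_3}(x+y)$). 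The mixed cases with $Z_b$ are handled symmetrically using (C2).

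Finally the bounds: (i) is immediate since $\pi_{\delta_3}(x), \pi_{\delta_3}(y) \geq 1/2$ and $\pi_{\delta_3}(x+y) \leq 1/2$. For (ii), $\pi_{\delta_3}(x) = \frac{1}{2}\gamma_{\delta_3}(x - z^{(1)})$, $\pi_{\delta_3}(y) \geq 1/2$, $\pi_{\delta_3}(x+y) \leq 1/2$, so $\Delta\pi_{\delta_3}(x,y) \geq \frac{1}{2}\gamma_{\delta_3}(x - z^{(1)})$; since $\Lambda_{\delta_3,0} \subseteq \bar B^\infty_{n\delta_3}(0)$ (each vertex $v_{\delta_3}^{(i)}$ has $\infty$-norm at most $n\delta_3$), the definition of the gauge yields $\gamma_{\delta_3}(w) \geq \|w\|_\infty/(n\delta_3) \geq \|w\|_\infty/((n+1)\delta_3)$, finishing (ii). (iii) is symmetric, via the same gauge bound applied to $b - x - y - z^{(1)}$.

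The main obstacle is the case analysis for (C3), and specifically ruling out $x,y \in Z_0$ with $x+y \in Z_b$; here the strengthened hypothesis $\Lambda_{2\delta_3,z^{(1)}} \cap (b - \Lambda_{2\delta_3,z^{(2)}}) = \emptyset$ (rather than the analogous hypothesis at scale $\delta_3$) is exactly what is needed, because $\Lambda_{\delta_3,0} + \Lambda_{\delta_3,0} = \Lambda_{2\delta_3,0}$. Once that configuration is excluded, subadditivity reduces to the sublinearity of $\gamma_{\delta_3}$ from Lemma~\ref{lem:gamma_delta}.
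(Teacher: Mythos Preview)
Your proposal is correct and follows essentially the same case analysis as the paper's proof, invoking sublinearity of $\gamma_{\delta_3}$, the $\Lambda_{2\delta_3}$ disjointness hypothesis, and the containment $\Lambda_{\delta_3,0}\subseteq \bar B^\infty_{(n+1)\delta_3}(0)$ at the same junctures. Your reduction of the configuration $x\in Z_0$, $x+y\in Z_b$ to the case $x,y\in Z_0$ via the symmetry (C2) is a mild shortcut over the paper's direct computation in its Case~6, and your vertex bound $n\delta_3$ is marginally sharper than the paper's $(n+1)\delta_3$, but the overall structure is the same.
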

%
%
\begin{proof}
For any $z \in \mathbb Z^n$, note that $1/2\cdot\gamma_{\delta_3}(x - z)  \leq \frac{1}{2}$ if and only if $x \in \Lambda_{\delta_3,z}$ via~(\ref{eqn:S_simplices}) and~(\ref{eqn:gamma_delta}).  
Likewise, $1/2\cdot\gamma_{\delta_3}(b - x - z)  \leq \frac{1}{2}$ if and only if  $x \in b-\Lambda_{\delta_3,z}$.  
Hence, we may write
\begin{equation}\label{eqn:min_pi_delta}
\pi_{\delta_3}(x) = \min_{z \in \mathbb Z^n}\Big[ \min \Big(1/2,\, 1/2\cdot \gamma_{\delta_3}(x-z)\Big)\Big] + \max_{z \in \mathbb Z^n} \Big[  \max \Big(0, 1/2 - 1/2\cdot \gamma_{\delta_3}(b-x-z)\Big)\Big].
\end{equation}
Therefore $\pi_{\delta_3}$ is piecewise linear, as $\pi_{\delta_3}$ is given by the sum of the minimum of some piecewise linear functions and the maximum of some other piecewise linear functions; because $\gamma_{\delta_3}$ is a piecewise linear gauge function, this minimum and this maximum may both be taken over only finitely many piecewise linear functions on every compact set.

Next, we shall establish that $\pi_{\delta_3}$ satisfies (C1) and (C2).  For each $z \in \mathbb Z^n$, we have that $\pi_{\delta_3}(z) = \frac{1}{2}\gamma_{\delta_3}(0) = 0$.  Further, by~(\ref{eqn:min_pi_delta}) and the nonnegativity of $\gamma_{\delta_3}$, we have that $\pi_{\delta_3} \geq 0$, so that $\pi_{\delta_3}$ satisfies condition (C1).  To see that (C2) is satisfied, note that if $x \notin \Lambda_{\delta_3,z},b- \Lambda_{\delta_3,z}$ for all $z \in \mathbb Z^n$, then $\pi_{\delta_3}(x) + \pi_{\delta_3}(b-x) = \frac{1}{2}+\frac{1}{2} =1$.  Otherwise, 
\begin{align*}
&\pi_{\delta_3}(x) + \pi_{\delta_3}(b-x) = 1/2\cdot \gamma_{\delta_3}(x-z) + 1-1/2\cdot\gamma_{\delta_3}(x-z) = 1\;\,\, \qquad\quad\text{ if } \;x \in \Lambda_{\delta_3,z},\\
\text{ and } \quad &\pi_{\delta_3}(x) + \pi_{\delta_3}(b-x) = 1-1/2\cdot \gamma_{\delta_3}(b-x-z) + 1/2\cdot\gamma_{\delta_3}(b-x-z) = 1 \;\text{ if } \;x \in b-\Lambda_{\delta_3,z}.
\end{align*} 
Thus, $\pi_{\delta_3}$ satisfies condition (C2).

Let $x,y \in \R^n$ and consider the following three statements:
\begin{itemize}
\item[(a)] $x \in \Lambda_{\delta_3,z}$ for some $z \in \mathbb Z^n$,
\item[(b)] $y \in \Lambda_{\delta_3,z}$ for some $z \in \mathbb Z^n$, and
\item[(c)] $x+y \in b-\Lambda_{\delta_3,z}$ for some $z \in \mathbb Z^n$.
\end{itemize} 
By evaluating each of the following cases, we will demonstrate that items (i) - (iii) of our result hold and that $\pi_{\delta_3}$ satisfies (C3).

{\bf Case 1:} Suppose that all of statements (a), (b), and (c) are false, or equivalently that $x,y \notin \Lambda_{\delta_3,z}$, $x+y \notin b-\Lambda_{\delta_3,z} $ for all $z \in \mathbb Z^n$.  Then item (i) must hold, as $\pi_{\delta_3}(x), \pi_{\delta_3}(y) \geq \frac{1}{2}$ and $\pi_{\delta_3}(x+y) \leq \frac{1}{2}$.

{\bf Case 2:} Suppose that all of statements (a), (b), and (c) are true.  Then there exist $z^{(1)}, z^{(2)}, z^{(3)} \in \mathbb Z^n$ such that $x \in \Lambda_{\delta_3,z^{(1)}}$, $y \in \Lambda_{\delta_3,z^{(2)}}$, $x+y \in \big( b-\Lambda_{\delta_3,z^{(3)}}\big)$.  But this implies that $x+y \in \Lambda_{2\delta_3,(z^{(1)}+z^{(2)})}$ so that $\Lambda_{2\delta_3,(z^{(1)}+z^{(2)})} \cap \big(b-\Lambda_{2\delta_3,z^{(3)}} \big)\ne \emptyset$, a contradiction of the choice of $\delta_3$.

{\bf Case 3:} Suppose that exactly one of (a) or (b) is true, and (c) is false;  without loss of generality, take (a) to be true and (b) to be false.  Equivalently, we have $x \in \Lambda_{\delta_3,z^{(1)}}$ for some $z^{(1)} \in \mathbb Z^n$, but $y \notin \Lambda_{\delta_3,z}$, $x+y \notin b-\Lambda_{\delta_3,z}$ for all $z \in \mathbb Z^n$.    
Thus, $\pi_{\delta_3}(y) \geq \frac{1}{2} \geq \pi_{\delta_3}(x+y)$.  Furthermore, \begin{equation}\label{eqn:Lambda_subset}\Lambda_{\delta_3,0} \subseteq \bar B_{(n+1)\delta_3}^\infty(0),\end{equation} since 
\begin{align}
r \in \Lambda_{\delta_3,0} & \implies r = \sum_{i=1}^{n+1} \lambda_i v_{\delta_3}^{(i)} \text{ with each } \lambda_i \geq 0 \text{ and } \sum_{i=1}^{n+1} \lambda_i = 1 \notag\\
				     & \implies \| r\|_\infty  =  \Big\| \sum_{i=1}^n \lambda_i \delta_3 (b-\onebld + n e^{(i)}) + \lambda_{n+1}(b-\onebld)\Big\|_\infty \notag \\
				     & \qquad \qquad \quad \leq \delta_3\sum_{i=1}^{n+1} \lambda_i \|b-\onebld\|_\infty + n\delta_3\Big\|\sum_{i=1}^n \lambda_i e^{(i)}\Big\|_\infty  \leq (n+1)\delta_3.\notag
\end{align}
Hence,
\begin{align*}
\Delta \pi_{\delta_3}(x,y) &\geq \pi_{\delta_3}(x) = 1/2\cdot \gamma_{\delta_3}(x-z^{(1)}) = 1/2\cdot\inf\Big\{t >0 \,\Big|\, t^{-1}(x-z^{(1)}) \in \Lambda_{\delta_3,0}\Big\}\\
 					   &\geq  1/2\cdot\inf \Big\{t >0 \,\Big|\, t^{-1}(x-z^{(1)}) \in \bar B_{(n+1)\delta_3}^\infty(0) \} \Big\} = \frac{1}{2(n+1)\delta_3} \|x -z^{(1)} \|_\infty.
\end{align*}
This demonstrates that item (ii) holds.

Before continuing, observe that by~(\ref{eqn:Lambda_subset}),
\begin{equation}\label{eqn:case_3}
 \Lambda_{\delta_3,z} \subseteq \bar B_{(n+1)\delta_3}^\infty(z) \quad \text{ for all } \quad z \in \mathbb Z^n,
\end{equation}
which we will use later on.

{\bf Case 4:} Suppose that (a) and (b) are false, but (c) is true, i.e., $x+y \in b-\Lambda_{\delta_3,z^{(1)}}$ for some $z^{(1)} \in \mathbb Z^n$, but $x,y \notin \Lambda_{\delta_3,z}$ for all $z \in \mathbb Z^n$.  Then $\pi_{\delta_3}(x), \pi_{\delta_3}(y) \geq \frac{1}{2}$, and by an argument similar to that in Case 3,
\[
\Delta \pi_{\delta_3}(x,y) \geq 1- \big(1- 1/2\cdot\gamma_{\delta_3}(b-x-y-z^{(1)})\big) = 1/2\cdot\gamma_{\delta_3}(b-x-y-z^{(1)}) \geq \frac{1}{2(n+1)\delta_3} \|b-x-y-z^{(1)}\|_\infty.
\]
This verifies item (iii).

{\bf Case 5:}  Suppose that (a) and (b) are true, but (c) is false.  Then there exist $z^{(1)}, z^{(2)} \in \mathbb Z^n$ such that $x \in \Lambda_{\delta_3,z^{(1)}}$ and $y \in \Lambda_{\delta_3,z^{(2)}}$.  In view of the subadditivity of $\gamma_{\delta_3}$ (c.f.~Lemma~\ref{lem:gamma_delta}), the fact that $x+y \notin b-\Lambda_{\delta_3,z}$ for all $z \in \mathbb Z^n$ (so that $1/2\cdot \gamma_{\delta_3}(b-x-y-z) \geq 1/2$ for all $z \in \mathbb R^n$ and $\frac12 \geq \pi_{\delta_3}(x+y)$),
\[
\pi_{\delta_3}(x) + \pi_{\delta_3}(y) = 1/2\cdot\gamma_{\delta_3}(x-z^{(1)}) + 1/2\cdot\gamma_{\delta_3}(y-z^{(2)}) \geq 1/2\cdot\gamma_{\delta_3}(x+y-z^{(1)}-z^{(2)}) \geq \pi_{\delta_3}(x+y).
\]
Hence, $\Delta \pi_{\delta_3}(x,y) \geq 0$.

{\bf Case 6:}  Suppose that exactly one of (a) or (b) is true, and (c) is true.  Without loss of generality, take (a) to be true and (b) to be false, so that $x \in \Lambda_{\delta_3,z^{(1)}}$ and $x+y \in b-\Lambda_{\delta_3,z^{(2)}}$ for some $z^{(1)}, z^{(2)} \in \mathbb Z^n$.  Since, $y \notin \Lambda_{\delta_3,z}$ for all $z \in \mathbb Z^n$, we have that $1/2 \leq 1/2\cdot\gamma_{\delta_3}(y-z)$ for all such $z$.  
Thus, by~(\ref{eqn:min_pi_delta}), $\pi_{\delta_3}(y) \geq 1 - 1/2\cdot\gamma_{\delta_3}(b-y-z^{(2)}-z^{(1)})$.  Therefore,
\begin{align*}
\Delta \pi_{\delta_3}(x,y) & = \pi_{\delta_3}(x) + \pi_{\delta_3}(y) - \pi_{\delta_3}(x+y) \\
&\geq 1/2\cdot\gamma_{\delta_3}(x-z^{(1)}) + 1 - 1/2\cdot\gamma_{\delta_3}(b-y-z^{(2)}-z^{(1)}) - \big( 1- 1/2\cdot\gamma_{\delta_3}(b-x-y-z^{(2)}) \big)\\
& = 1/2\cdot\gamma_{\delta_3}(x-z^{(1)}) + 1/2\cdot\gamma_{\delta_3}(b-x-y-z^{(2)}) - 1/2\cdot\gamma_{\delta_3}(b-y-z^{(2)}-z^{(1)})
 \geq 0,
\end{align*}
where the last inequality follows from the subadditivity of $\gamma_{\delta_3}$ (c.f.~Lemma~\ref{lem:gamma_delta}).

Combining Cases 1-6 demonstrate that $\pi_{\delta_3}$ satisfies (C3), and is thus  minimal.  This completes the proof.
\end{proof}
%
%
\subsection{Reintroducing Subadditivity}\label{subsec:pi_comb}
%
%
By taking a convex combination of $\wt \pi$ (c.f.~Proposition~\ref{prop:pwl}) and $\pi_{\delta_3}$ (for $\delta_3$ sufficiently small), we now construct $\pi_{comb}$, a piecewise linear,  minimal approximation of the continuous,  minimal function $\pi$.   We shall see that not only $\pi_{comb}$ is subadditive, but also that $\Delta \pi_{comb}(x,y) > 0$ for ``most" $(x,y) \in \mathbb R^n \times \mathbb R^n$.
%
%

Given $\epsilon > 0$ and a continuous,  minimal $\pi:\mathbb R^n \rightarrow \mathbb R$ for $b \in ([0,1)^n \cap \mathbb Q^n)\setminus\{0\}$, let $\wt\pi$ be any function that satisfies the conclusions of Proposition~\ref{prop:pwl} for suitable $\wt \delta > 0$. 
Let $\wt L$ denote the Lipschitz constant of $\wt\pi$ with respect to the $\infty$-norm (c.f.~(ii) of Proposition~\ref{prop:pwl}).  Then
choose $\delta_3 > 0$ such that 
\begin{equation}\label{eqn:delta_3}
\delta_3 = p^{-1} \leq \min\left\{ \frac{5\epsilon}{12\wt L(n+1)},  \frac{\wt\delta}{2(n+1)}, \frac{5 \wt\delta \epsilon}{12(n+1)} \right\} \text{ for some } p \in \mathbb N \text{ and } b \in U_{\delta_3}.
\end{equation}
Define the function $\pi_{comb}:\mathbb R^n \rightarrow \mathbb R$ via
\begin{equation}\label{eqn:pi_comb}
\pi_{comb} := \Big(1-\frac{5\epsilon}{6}\Big)\wt\pi + \frac{5\epsilon}{6} \pi_{\delta_3}.
\end{equation}
In the following proposition, we demonstrate that $\pi_{comb}$ is  minimal.  We also derive lower bounds on $\Delta \pi_{comb}(x,y)$, for certain $(x,y) \in \mathbb R^n \times \mathbb R^n$.  
%
%
\begin{proposition}\label{prop:pi_comb}
Fix $b \in ([0,1)^n \cap \mathbb Q^n)\setminus\{0\}$, $\epsilon > 0$, and a continuous,  minimal function $\pi:\mathbb R^n \rightarrow \mathbb R$. 
Let $\wt \pi:\mathbb R^n \rightarrow \mathbb R$ be any function that satisfies the conclusions of Proposition~\ref{prop:pwl} for $\wt\delta > 0$, and choose $\delta_3 > 0$ satisfying~\eqref{eqn:delta_3}. 
Let $\pi_{comb}$ be the piecewise linear function given by~(\ref{eqn:pi_comb}).  Then, 
(i) $\|\wt\pi - \pi_{comb}\|_\infty \leq \frac{5\epsilon}{6}$, 
(ii) $\pi_{comb}$ is  minimal, 
(iii) if $x, y \notin \Lambda_{\delta_3,z}$, $x+y \notin b-\Lambda_{\delta_3,z}$ for all $z \in \mathbb Z^n$, then $\Delta \pi_{comb}(x,y) \geq \frac{\epsilon}{4}$, and 
(iv) $\frac{5\epsilon}{12}\gamma_{\delta_3}(x)\geq \pi_{comb}(x)$ for all $x \in \mathbb R^n$.
\end{proposition}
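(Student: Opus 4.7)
The plan is to prove items (i), (iii), (ii), (iv) in that order, since (iii) is the ``easy'' case of the subadditivity analysis required by (ii), and both (ii) and (iv) rely crucially on the same structural fact: that by (\ref{eqn:delta_3}) the simplices $\Lambda_{\delta_3,z}$ all lie inside $\bar B_{\wt\delta}^\infty(z)$, so Proposition~\ref{prop:pwl}(vii)--(viii) force $\wt\pi$ to equal $0$ on $\Lambda_{\delta_3,z}$ and $1$ on $b-\Lambda_{\delta_3,z}$. Item (i) follows directly from $\wt\pi - \pi_{comb} = \tfrac{5\epsilon}{6}(\wt\pi-\pi_{\delta_3})$ together with the observation that both functions take values in $[0,1]$ (being nonnegative and satisfying (C2)). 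For (iii), the hypothesis on $(x,y)$ places us in Case~1 of Lemma~\ref{lem:pi_delta}, giving $\Delta\pi_{\delta_3}(x,y) \geq \tfrac12$, while Proposition~\ref{prop:pwl}(iv) yields $\Delta\wt\pi(x,y) \geq -\tfrac{\epsilon}{6}$; feeding both into the convex combination (\ref{eqn:pi_comb}) produces the bound $\tfrac{5\epsilon}{12}-\tfrac{\epsilon}{6} = \tfrac{\epsilon}{4}$ after a brief arithmetic step.

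For (ii), conditions (C1) and (C2) for $\pi_{comb}$ transfer from $\wt\pi$ and $\pi_{\delta_3}$ by convex combination, so the work is entirely in subadditivity (C3). The plan is to carry out the same case split on $(x,y)$ as in the proof of Lemma~\ref{lem:pi_delta}, in terms of which of the three events ``$x \in \Lambda_{\delta_3,z^{(1)}}$'', ``$y \in \Lambda_{\delta_3,z^{(2)}}$'', ``$x+y \in b-\Lambda_{\delta_3,z^{(3)}}$'' occur. The generic case is (iii), the all-true case is ruled out by the choice of $\delta_3$, and the two ``all-flat'' cases (where either both $x,y$ or one of them together with $x+y$ sit in special simplices) are handled by observing that the structural flatness noted above forces $\wt\pi$ to take value $0$ or $1$ at all three of $x, y, x+y$, so $\Delta\wt\pi(x,y)=0$ and the nonnegativity of $\Delta\pi_{\delta_3}$ supplied by Lemma~\ref{lem:pi_delta} is enough. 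The main obstacle is Cases~3 and~4, where exactly one of $x, y$ (say $x \in \Lambda_{\delta_3,z^{(1)}}$) or the sum $x+y$ sits in a special simplex. Here I will use $\wt\pi(x)=0$ together with periodicity to rewrite $\wt\pi(x+y) = \wt\pi((x-z^{(1)})+y)$ and then apply Lipschitz continuity with constant $\wt L$ to obtain $\Delta\wt\pi(x,y) \geq -\wt L\|x-z^{(1)}\|_\infty$ (and the symmetric bound via (C2) in Case~4). These deficits are tailored to be absorbed by the quantitative subadditivity estimates of Lemma~\ref{lem:pi_delta}(ii)--(iii): the first inequality in (\ref{eqn:delta_3}) is precisely what makes the resulting bracket $\tfrac{5\epsilon}{12(n+1)\delta_3} - (1-\tfrac{5\epsilon}{6})\wt L$ nonnegative, so that $\|x-z^{(1)}\|_\infty$ factors out and $\Delta\pi_{comb}(x,y) \geq 0$.

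Finally, item (iv) will be proved by a dichotomy on $\gamma_{\delta_3}(x)$. If $\gamma_{\delta_3}(x) \geq 12/(5\epsilon)$ the bound is immediate from $\pi_{comb}\leq 1$. Otherwise, the inclusion $\Lambda_{\delta_3,0} \subseteq \bar B_{(n+1)\delta_3}^\infty(0)$ and the third inequality of (\ref{eqn:delta_3}) place $x$ inside $\bar B_{\wt\delta}^\infty(0)$, so conclusion (vii) of Proposition~\ref{prop:pwl} gives $\wt\pi(x)=0$ and thus $\pi_{comb}(x) = \tfrac{5\epsilon}{6}\pi_{\delta_3}(x)$. Conclusion (ix) rules out $x$ lying in any $b-\Lambda_{\delta_3,z}$, and a size comparison (using the second inequality of (\ref{eqn:delta_3}) together with $\wt\delta < 1/2$, which follows from (ix) applied at $z^{(1)}=z^{(2)}=0$) rules out $x \in \Lambda_{\delta_3,z}$ for $z\neq 0$. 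So $x$ belongs either to $\Lambda_{\delta_3,0}$, where $\pi_{\delta_3}(x) = \tfrac12\gamma_{\delta_3}(x)$ by (\ref{eqn:pi_delta}), or to the ``otherwise'' branch of (\ref{eqn:pi_delta}), where $\pi_{\delta_3}(x) = \tfrac12$ and $\gamma_{\delta_3}(x) \geq 1$; in both subcases $\pi_{\delta_3}(x) \leq \tfrac12\gamma_{\delta_3}(x)$, yielding the desired $\pi_{comb}(x) \leq \tfrac{5\epsilon}{12}\gamma_{\delta_3}(x)$.
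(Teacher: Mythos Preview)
Your proposal is correct and follows essentially the same approach as the paper. The only differences are organizational: you prove (iii) before (ii) and, for (iv), you split on whether $\gamma_{\delta_3}(x) \geq 12/(5\epsilon)$ rather than on whether $x \in \bar B_{\wt\delta}^\infty(0)$, but these two dichotomies are equivalent under the third inequality in~(\ref{eqn:delta_3}) and lead to the same computation.
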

%
%
\begin{proof}
(i) 
To see that (i) holds, note that by Proposition~\ref{prop:pwl} and Lemma~\ref{lem:pi_delta}, both $\wt\pi$ and $\pi_{\delta_3}$ satisfy (C1) and (C2).  Hence, $0 \leq \wt\pi, \pi_{\delta_3} \leq 1$, and 
\[
\| \wt\pi - \pi_{comb}\|_\infty = \frac{5\epsilon}{6} \|\wt\pi - \pi_{\delta_3}\|_\infty \leq \frac{5\epsilon}{6}.
\]  

(ii) and (iii)
To see that $\pi_{comb}$ satisfies conditions (C1) and (C2), note that $\pi_{comb}$ is a convex combination of $\wt\pi$ and $\pi_{\delta_3}$, both of which satisfy these conditions by Proposition~\ref{prop:pwl} and Lemma~\ref{lem:pi_delta} respectively.  Therefore, $\pi_{comb}$ satisfies conditions (C1) and (C2) as well.

Let $x,y \in \R^n$ and consider the following statements (just as in the proof of Lemma~\ref{lem:pi_delta}):
\begin{itemize}
\item[(a)] $x \in \Lambda_{\delta_3,z}$ for some $z \in \mathbb Z^n$,
\item[(b)] $y \in \Lambda_{\delta_3,z}$ for some $z \in \mathbb Z^n$, and
\item[(c)] $x+y \in b-\Lambda_{\delta_3,z}$ for some $z \in \mathbb Z^n$.
\end{itemize} 
In order to establish (ii) and (iii), we therefore consider each of the following five cases.  Note that (a), (b), and (c) cannot all be true (c.f.~Case 2 from the proof of Lemma~\ref{lem:pi_delta}).  Also observe that for each $z \in \mathbb Z^n$, we have
\begin{equation}\label{eqn:subset}
\Lambda_{\delta_3,z} \subseteq \bar B^\infty_{\wt\delta/2}(z) \subseteq \bar B^\infty_{\wt\delta}(z)
\end{equation}
via~(\ref{eqn:case_3}) and the choice of $\delta_3 \leq \frac{\wt\delta}{2(n+1)}$ in~(\ref{eqn:delta_3}).

{\bf Case 1:} Suppose that all of statements (a), (b), and (c) are false so that $x, y \notin \Lambda_{\delta_3,z}$, $x+y \notin b-\Lambda_{\delta_3,z}$ for all $z \in \mathbb Z^n$.  Hence, by~(\ref{eqn:pi_comb}), (i) of Lemma~\ref{lem:pi_delta}, and (iv) of Proposition~\ref{prop:pwl}, 
\[
\Delta \pi_{comb}(x,y) = \frac{5\epsilon}{6} \Delta \pi_{\delta_3}(x,y) + \Big(1-\frac{5\epsilon}{6}\Big) \Delta \wt\pi(x,y) \geq \frac{5\epsilon}{12} - \frac{\epsilon}{6} = \frac{\epsilon}{4}.
\]
This demonstrates that (iii) holds.

{\bf Case 2:} Suppose that exactly one of (a) or (b) is true, and (c) is false.  Without loss of generality, take (a) to be true and (b) to be false, so that $x \in \Lambda_{\delta_3,z^{(1)}} \subseteq \bar B^\infty_{\wt\delta}(z^{(1)})$ (via~(\ref{eqn:subset})) for some $z^{(1)} \in \mathbb Z^n$.  Therefore, 
\begin{align*}
\Delta \pi_{comb}(x,y) & = \frac{5\epsilon}{6} \Delta \pi_{\delta_3}(x,y) + \Big(1-\frac{5\epsilon}{6}\Big) \Delta \wt\pi(x,y)\\
					  & \geq \frac{5\epsilon}{12(n+1)\delta_3} \|x -z^{(1)} \|_\infty - |\wt\pi(x) + \wt\pi(y) - \wt\pi(x+y)|\\
					  & = \frac{5\epsilon}{12(n+1)\delta_3} \|x -z^{(1)} \|_\infty - | \wt\pi(y) - \wt\pi(x-z^{(1)}+y)|\\
					  & \geq \frac{5\epsilon}{12(n+1)\delta_3} \|x -z^{(1)} \|_\infty - \wt L\, \|x -z^{(1)} \|_\infty 
					   \geq 0,
\end{align*}
where the first equality follows from~\eqref{eqn:pi_comb}, the first inequality follows from (ii) of Lemma~\ref{lem:pi_delta}, the second equality follows from~(vii) and (vi) of Proposition~\ref{prop:pwl}, the second inequality follows from the Lipschitz continuity of $\wt\pi$, and last inequality follows from the choice of $\delta_3 \leq \frac{5\epsilon}{12\wt L(n+1)}$ in~(\ref{eqn:delta_3}).

{\bf Case 3:} Suppose that (a) and (b) are false, but (c) is true, so that $x+y \in b - \Lambda_{\delta_3,z^{(1)}}\subseteq  \bar B^\infty_{\wt\delta}(b-z^{(1)})$  (via~(\ref{eqn:subset})) for some $z^{(1)} \in \mathbb Z^n$.  
Then, 
\begin{align*}
\Delta \pi_{comb}(x,y) & = \frac{5\epsilon}{6} \Delta \pi_{\delta_3}(x,y) + \Big(1-\frac{5\epsilon}{6}\Big) \Delta \wt\pi(x,y)\\
					  & \geq \frac{5\epsilon}{12(n+1)\delta_3} \|b-x-y-z^{(1)} \|_\infty - |\wt\pi(x) + \wt\pi(y) - \wt\pi(x+y)|\\
					  & = \frac{5\epsilon}{12(n+1)\delta_3} \|b-x-y-z^{(1)} \|_\infty - |1-\wt\pi(b-x) + \wt\pi(y) - \wt\pi(x+y)| \\
					  & = \frac{5\epsilon}{12(n+1)\delta_3} \|b-x-y-z^{(1)} \|_\infty - |\wt\pi(y+z^{(1)}) - \wt\pi(b-x)| \\
					  & \geq \frac{5\epsilon}{12(n+1)\delta_3}  \|b-x-y-z^{(1)} \|_\infty  - \wt L\,\|b-x-y-z^{(1)}\|_\infty  \geq 0,
\end{align*}
where the first equality follows from~(\ref{eqn:pi_comb}), the first inequality follows from (iii) of Lemma~\ref{lem:pi_delta}, the second and third equalities follow from (v), (vi), and (viii) of Proposition~\ref{prop:pwl}, and the last two inequalities follow from the Lipschitz continuity of $\wt\pi$ and the choice of $\delta_3$.

{\bf Case 4:}  Suppose that (a) and (b) are true, but (c) is false.  Then there exist $z^{(1)}, z^{(2)} \in \mathbb Z^n$ such that $x \in \Lambda_{\delta_3,z^{(1)}} \subseteq \bar B^\infty_{\wt\delta/2}(z^{(1)})$ and $y \in \Lambda_{\delta_3,z^{(2)}} \subseteq \bar B^\infty_{\wt\delta/2}(z^{(2)})$ (c.f.~(\ref{eqn:subset})).  Hence, we must have that $x+y \in\bar B^\infty_{\wt\delta}(z^{(1)}+z^{(2)})$.  Therefore, by (vii) of Proposition~\ref{prop:pwl}, we have $\Delta \wt\pi(x,y) = 0$.  Since $\pi_{\delta_3}$ is subadditive via Lemma~\ref{lem:pi_delta}, we obtain $\Delta \pi_{comb}(x,y) \geq 0$ from~(\ref{eqn:pi_comb}).

{\bf Case 5:}  Suppose that exactly one of (a) or (b) is true, and (c) is true.  Without loss of generality, take (a) to be true and (b) to be false, so that $x \in \Lambda_{\delta_3,z^{(1)}} \subseteq \bar B^\infty_{\wt\delta/2}(z^{(1)})$ and $x+y \in b-\Lambda_{\delta_3,z^{(2)}} \subseteq \bar B^\infty_{\wt\delta/2}(b-z^{(2)})$ for some $z^{(1)}, z^{(2)} \in \mathbb Z^n$ (c.f.~(\ref{eqn:subset})).  We must then have that $y \in \bar B^\infty_{\wt\delta}(b-z^{(2)}-z^{(1)})$, and thus $\Delta \wt\pi(x,y) = 0$ via (vii) and (viii) of Proposition~\ref{prop:pwl}.  It follows that $\Delta \pi_{comb}(x,y) \geq 0$ by the subadditivity of $\pi_{\delta_3}$ (c.f.~Lemma~\ref{lem:pi_delta} and~(\ref{eqn:pi_comb})).   

The above five cases and the preceding discussion demonstrate that (ii) holds.

(iv) To see that (iv) holds, first note that 
\begin{equation}\label{eqn:half_bound}
1/2\cdot \gamma_{\delta_3}(b-x-z) \geq 1/2 \;\, \text{ for all } \; z \in \mathbb Z^n \; \text{ and } \; x \in \bar B^\infty_{\wt\delta}(0), 
\end{equation}
as otherwise, we have by~(\ref{eqn:gamma_delta}) and~(\ref{eqn:subset}) that
\[
b-x-z \in \Lambda_{\delta_3,0} \subseteq  \bar B^\infty_{\wt\delta}(0)
\implies x \in  \bar B^\infty_{\wt\delta}(b-z)
\implies  \bar B^\infty_{\wt\delta}(0) \cap \bar B^\infty_{\wt\delta}(b-z) \ne \emptyset,
\]
a contradiction of (ix) in Proposition~\ref{prop:pwl}.  Now, for all $x \in  \bar B^\infty_{\wt\delta}(0)$, 
\[
\pi_{comb}(x) = \frac{5\epsilon}{6} \pi_{\delta_3}(x) \leq \frac{5\epsilon}{12} \gamma_{\delta_3}(x),
\]
where the equality follows from~(\ref{eqn:pi_comb}) and the fact that $\tilde \pi(x) = 0$ by (vii) of Proposition~\ref{prop:pwl}; the inequality follows from~(\ref{eqn:min_pi_delta}), in which the second term is zero by~(\ref{eqn:half_bound}).

 Next, observe that $\Big( \frac{\wt\delta}{(n+1)\delta_3}\cdot \Lambda_{\delta_3,0}\Big) \subseteq \bar B^\infty_{\wt\delta}(0)$ by~(\ref{eqn:case_3}).  Therefore, for all $x \notin  \bar B^\infty_{\wt\delta}(0)$, we have $\frac{(n+1)\delta_3}{\wt\delta} x \notin  \Lambda_{\delta_3,0}$ and
\[
\pi_{comb}(x) \leq 1 \leq \frac{5\wt\delta\epsilon}{12\delta_3 (n+1)} \leq \frac{5\epsilon}{12} \gamma_{\delta_3}(x), 
\]
via the fact that $0 \leq \pi_{comb} \leq 1$ (as $\pi_{comb}$ is  minimal) and the choice of $\delta_3 \leq  \frac{5 \wt\delta \epsilon}{12(n+1)} $~(c.f.~(\ref{eqn:delta_3})).  
Hence, statement (iv) holds as well.
\end{proof}
%
%
\paragraph{Piecewise linear minimal functions are dense in the set of continuous minimal functions.} In~\cite[Lemma 1]{basu2016minimal}, the authors demonstrate that when $n =1$, for any $\epsilon > 0$, a continuous,  minimal function $\pi:\mathbb R^n \rightarrow \mathbb R$ can be approximated by a piecewise linear,  minimal function $\ol \pi:\mathbb R^n \rightarrow \mathbb R$ such that $\|\pi-\ol\pi\|_\infty < \epsilon$.  
In the proof of this result, $\ol\pi$ is the interpolant of $\pi$ whose knots are given by $\frac{1}{q}\mathbb Z$ for some appropriately chosen $q \in \mathbb N$ sufficiently large.  The subadditivity of $\ol\pi$ follows from the subadditivity of $\pi$ and the structure of the underlying polyhedral complex of $\Delta \ol \pi$.  
However, when $n > 1$, this particular argument fails, as the underlying polyhedral complex of $\Delta \ol \pi$ is much more difficult to describe.  Hence, new techniques, such as those found in this paper, are required to establish the existence of such a $\ol\pi$.  
The following corollary generalizes~\cite[Lemma 1]{basu2016minimal} to arbitrary $n \in \mathbb N$.  The proof follows directly from Propositions~\ref{prop:pwl} and~\ref{prop:pi_comb}.  
%
%
\begin{corollary}\label{cor:PWL-approx}
Fix $b \in ([0,1)^n \cap \mathbb Q^n)\setminus\{0\}$ and let $\pi:\mathbb R^n \rightarrow \mathbb R$ be a continuous,  minimal function.  Then for each $\epsilon > 0$ there exists a piecewise linear,  minimal function $\ol \pi:\mathbb R^n \rightarrow \mathbb R$ such that $\| \pi- \ol\pi\|_\infty < \epsilon$.
\end{corollary}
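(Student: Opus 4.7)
The plan is to directly combine Propositions~\ref{prop:pwl} and~\ref{prop:pi_comb} via the triangle inequality, using the fact that together they already produce a piecewise linear minimal approximant whose error is controlled by $\epsilon$. Specifically, given a continuous minimal $\pi$ and $\epsilon > 0$, I would first invoke Proposition~\ref{prop:pwl} to produce $\wt\pi$ and $\wt\delta > 0$ satisfying conditions (i)--(ix) of that proposition; in particular, $\wt\pi$ is piecewise linear and Lipschitz continuous with $\|\pi - \wt\pi\|_\infty < \epsilon/18$, and $\wt\pi$ already satisfies (C1) and (C2). The only property $\wt\pi$ still lacks is genuine subadditivity, since Proposition~\ref{prop:pwl}(iv) only gives subadditivity of $\wt\pi + \epsilon/6$.

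To cure this, I would choose $\delta_3 > 0$ meeting the three smallness constraints in~\eqref{eqn:delta_3} (relative to the Lipschitz constant $\wt L$ of $\wt\pi$ and to $\wt\delta$), and then form $\pi_{comb}$ as in~\eqref{eqn:pi_comb}. Proposition~\ref{prop:pi_comb} immediately gives that $\pi_{comb}$ is piecewise linear and minimal, and that $\|\wt\pi - \pi_{comb}\|_\infty \leq 5\epsilon/6$. Setting $\ol\pi := \pi_{comb}$, the triangle inequality yields
\[
\|\pi - \ol\pi\|_\infty \;\leq\; \|\pi - \wt\pi\|_\infty + \|\wt\pi - \pi_{comb}\|_\infty \;<\; \frac{\epsilon}{18} + \frac{5\epsilon}{6} \;=\; \frac{16\epsilon}{18} \;<\; \epsilon,
\]
which delivers the desired piecewise linear minimal approximation.

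There is no real obstacle to overcome here: all the technical difficulty has been absorbed into the two propositions cited. The only thing one must be slightly careful about is the bookkeeping of constants, namely verifying that the $\epsilon/18$ error tolerance chosen in Proposition~\ref{prop:pwl}(iii) together with the $5\epsilon/6$ tolerance in Proposition~\ref{prop:pi_comb}(i) indeed sum to strictly less than $\epsilon$, which is precisely what the constants were calibrated to guarantee. Thus the corollary is simply the assembly of the piecewise linear skeleton from Section~\ref{sec:pwl_approx} with the subadditivity-restoring perturbation from Section~\ref{sec:perturb}.
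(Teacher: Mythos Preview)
Your proposal is correct and follows essentially the same approach as the paper's own proof: invoke Proposition~\ref{prop:pwl} to get $\wt\pi$ with $\|\pi-\wt\pi\|_\infty<\epsilon/18$, then Proposition~\ref{prop:pi_comb} to get the piecewise linear minimal $\pi_{comb}$ with $\|\wt\pi-\pi_{comb}\|_\infty\leq 5\epsilon/6$, set $\ol\pi=\pi_{comb}$, and conclude by the triangle inequality. The paper's proof is in fact slightly terser than yours but uses exactly the same two ingredients and the same constant bookkeeping.
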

%
%
\begin{proof}
For $\epsilon > 0$ there exists $\wt \pi:\mathbb R^n \rightarrow \mathbb R$ such that $\wt \pi$ satisfies the conclusions of Proposition~\ref{prop:pwl}, including $\|\pi - \wt \pi \|_\infty < \frac{\epsilon}{18}$.  Furthermore, by Proposition~\ref{prop:pi_comb}, there exists a piecewise linear,  minimal $\pi_{comb}: \mathbb R^n \rightarrow \mathbb R$ such that $\| \wt\pi - \pi_{comb}\|_\infty \leq \frac{5\epsilon}{6}$.  Taking $\ol \pi \equiv \pi_{comb}$ and applying the triangle inequality completes the proof.
\end{proof}
%
%
\section{Symmetric $(n+1)$-Slope Fill-in}\label{sec:sym_fill_in}
%
%
Our goal is to produce a piecewise linear,  minimal function $\pi^\prime$ that (i) approximates a given continuous, minimal function $\pi$, and (ii) is 
an extreme function of the set of  minimal functions.  
By \cite[Theorem 1.7]{basu-hildebrand-koeppe-molinaro:k+1-slope}, any $(n+1)$-slope (c.f.~Definition~\ref{def:slope}), genuinely $n$-dimensional (c.f.~Definition~\ref{def:genuine}),  minimal function is an extreme function.  Therefore, in this section, we manipulate the piecewise linear,  minimal (and in fact genuinely $n$-dimensional) $\pi_{comb}$ from Section~\ref{sec:perturb} to obtain a $(n+1)$-slope, genuinely $n$-dimensional,  minimal function $\pi^\prime$ (that is therefore extreme).  

We begin by applying a $(n+1)$-slope fill-in procedure (c.f.~(\ref{eqn:fill_in})) to $\pi_{comb}$, in order to obtain a $(n+1)$-slope function $\pi_{fill-in}$.  Although the $(n+1)$-slope fill-in procedure preserves the subadditivity of $\pi_{comb}$, as well as property (C1), it does not preserve the symmetry property, (C2).  To reintroduce property (C2), a symmetrization technique is carefully applied in~(\ref{eqn:pi_sym}), producing $\pi_{sym}$.  We shall see that $\pi_{sym}$ is in fact a genuinely $n$-dimensional, $(n+1)$-slope,  minimal function, which accurately approximates $\pi$.  Thus, $\pi^\prime \equiv \pi_{sym}$ is the desired extreme function.  The results of this section will allow us to complete the proof of Theorem~\ref{thm:main_thm} in Section~\ref{sec:proof}.
%
%
\subsection{A $(n+1)$-Slope Fill-in Procedure}
%
%
In this subsection, we apply the $(n+1)$-slope fill-in procedure to $\pi_{comb}$ in order to obtain the $(n+1)$-slope subadditive function $\pi_{fill-in}$, which satisfies (C1).  The following definition introduces the formal concept of a $k$-slope function for any $k \in \mathbb N$.
%
%
\begin{definition}\label{def:slope}
Suppose that $\theta:\mathbb R^n \rightarrow \mathbb R$ is a piecewise linear function.  If all the gradient vectors $a^P$ (c.f.~Definition~\ref{def:pwl}) of $\theta$ take values in a set of size $k$, then we say that $\theta$ is a {\bf k-slope function}.
\end{definition}
%
%
In \cite{infinite, infinite2, johnson1974group}, Gomory and Johnson describe a 2-slope fill-in procedure that extends a subadditive function $\theta:U \rightarrow \mathbb R$, for some subgroup $U$ of $\mathbb R$, to a subadditive function $\theta_{fill-in}: \mathbb R \rightarrow \mathbb R$ defined on all of $\mathbb R$.  This procedure is also used in~\cite{basu2016minimal} to produce a 2-slope function and prove Theorem~\ref{thm:main_thm} in the special case when $n = 1$.  We now extend this method to a $(n+1)$-slope fill-in procedure for functions $\theta: \mathbb R^n \rightarrow \mathbb R$, for arbitrary $n \in \mathbb N$.

Let  $\gamma : \mathbb R^n \rightarrow \mathbb R$ be a sublinear, $(n+1)$-slope function.  Additionally, let $U = \frac{1}{q}\mathbb Z^n$ for some $q \in \mathbb N$ and $\theta: \R^n \rightarrow \mathbb R$ be a subadditive function.  Then the $(n+1)$-slope fill-in of the function $\theta$ with respect to $U$ and $\gamma$ is $\theta_{fill-in}:\mathbb R^n \rightarrow \mathbb R$, which is defined such that
\begin{equation}\label{eqn:fill_in}
\theta_{fill-in}(x) := \min_{u \in U} \{ \theta(u) + \gamma(x-u) \}.
\end{equation}
The following lemma discusses several properties of the $(n+1)$-slope fill-in function.
%
%
\begin{lemma}\label{lem:fill_in}[Johnson (Section 7 in~\cite{johnson1974group})]
Let $U = \frac{1}{q}\mathbb Z^n$ for some $q \in \mathbb N$ and $\theta:\mathbb R^n\rightarrow \mathbb R$ be a subadditive function.  Suppose that $\gamma: \mathbb R^n \rightarrow \mathbb R$ is a sublinear, $(n+1)$-slope function with $\theta \leq \gamma$.  Then the $(n+1)$-slope fill-in $\theta_{fill-in}$ of $\theta$ with respect to $U$ and $\gamma$ is subadditive. Furthermore, $\theta_{fill-in} \geq \theta$ and $\theta_{fill-in}|_U =\theta|_U$.  
Finally, $\theta_{fill-in}$ is a $(n+1)$-slope function.
\end{lemma}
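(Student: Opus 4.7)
The plan is to verify the four conclusions in sequence, exploiting three ingredients: sublinearity of $\gamma$ (which forces $\gamma(0)=0$ and $\gamma(z+w)\le\gamma(z)+\gamma(w)$), subadditivity of $\theta$, and the fact that $U=\frac{1}{q}\mathbb Z^n$ is closed under addition. For the bound $\theta_{fill-in}\ge\theta$, given any $x\in\mathbb R^n$ and $u\in U$, I would write $\theta(x)=\theta(u+(x-u))\le\theta(u)+\theta(x-u)\le\theta(u)+\gamma(x-u)$ (using first subadditivity of $\theta$ and then $\theta\le\gamma$), and take the infimum over $u$. For $\theta_{fill-in}|_U=\theta|_U$, I plug $u=x\in U$ into~(\ref{eqn:fill_in}) to obtain $\theta_{fill-in}(x)\le\theta(x)+\gamma(0)=\theta(x)$, and combine with the previous bound. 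For subadditivity of $\theta_{fill-in}$, given $\varepsilon>0$ I select near-minimizers $u_x,u_y\in U$ for $x,y$; since $u_x+u_y\in U$,
\begin{align*}
\theta_{fill-in}(x+y)
  &\le \theta(u_x+u_y)+\gamma\bigl((x-u_x)+(y-u_y)\bigr) \\
  &\le \theta(u_x)+\theta(u_y)+\gamma(x-u_x)+\gamma(y-u_y) \\
  &\le \theta_{fill-in}(x)+\theta_{fill-in}(y)+\varepsilon,
\end{align*}
where the middle step uses subadditivity of both $\theta$ and $\gamma$; letting $\varepsilon\to 0$ completes this step.

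The $(n+1)$-slope claim is the main obstacle, and it decomposes into two sub-tasks. First, one must ensure that the infimum in~(\ref{eqn:fill_in}) is attained on a \emph{finite} subset of $U$ at each $x$, so that $\theta_{fill-in}$ is well-defined and piecewise linear on compact regions. The key observation is that $\gamma$, being the gauge of a bounded convex set with the origin in its interior, satisfies $\gamma(z)\ge c\|z\|_\infty$ for some $c>0$; coupled with the boundedness of $\theta$ (which holds in the paper's intended application, where $\theta=\pi_{comb}$ is minimal and hence bounded in $[0,1]$), the quantity $\theta(u)+\gamma(x-u)$ grows without bound as $\|u\|_\infty\to\infty$, so the minimum is attained over a finite lattice subset of $U$ on each compact region.

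Second, I would identify the gradients of $\theta_{fill-in}$. A sublinear, piecewise linear $(n+1)$-slope function $\gamma$ admits a representation $\gamma(z)=\max_{i\in[n+1]}\langle g^{(i)},z\rangle$ with its $n+1$ gradient vectors $g^{(1)},\dots,g^{(n+1)}$, by the same polarity argument used in the proof of Lemma~\ref{lem:gamma_delta}. Consequently, each translate $z\mapsto\theta(u)+\gamma(z-u)$ is piecewise linear whose gradients lie in $\{g^{(1)},\dots,g^{(n+1)}\}$, and the pointwise minimum of the finitely many such functions selected in the previous step is again piecewise linear, with gradients drawn from the same $(n+1)$-element set. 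This establishes the $(n+1)$-slope property and completes the proof.
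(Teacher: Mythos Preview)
Your proposal is correct and follows essentially the same approach as the paper: the paper uses the identical decomposition into subadditivity (via $u_1+u_2\in U$ and sublinearity of $\gamma$), the bound $\theta_{fill-in}\ge\theta$ (via $\gamma\ge\theta$ and subadditivity of $\theta$), and $\theta_{fill-in}|_U=\theta|_U$ (via plugging in $u=x$). In fact you are more careful than the paper on two points---you use $\varepsilon$-near minimizers where the paper tacitly assumes the minimum is attained, and you actually justify the $(n+1)$-slope claim (finiteness of the active $u$'s and identification of the gradients) whereas the paper simply asserts it ``by construction.''
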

%
%
\begin{proof}
Since $\theta$ is subadditive and $\gamma$ is sublinear, we first have that for some $u_1, u_2 \in U$,
\begin{align*}
\theta_{fill-in}(x)+\theta_{fill-in}(y) &=   \theta(u_1) + \gamma(x-u_1) + \theta(u_2) + \gamma(y-u_2)\\
							     &\geq \theta(u_1+u_2) +\gamma(x+y -(u_1+u_2))\\
							     & \geq \theta_{fill-in}(x+y),
\end{align*}
so $\theta_{fill-in}$ is subadditive.  Also, note that for all $u \in U$,
\[
\theta_{fill-in}(u) \leq \theta(u) + \gamma(0) = \theta(u),
\]
and for all $x \in \mathbb R^n$,
\[
\theta_{fill-in}(x) = \min_{u\in U}( \theta(u) + \gamma(x-u)) \geq \min_{u\in U}( \theta(u) + \theta(x-u)) \geq \theta(x),
\]
where the last inequality follows from subadditivity of $\theta$. Thus, $\theta_{fill-in} \geq \theta$ and $\theta_{fill-in}|_U =\theta|_U$.   
By construction, $\theta_{fill-in}$ is a $(n+1)$-slope function.  
\end{proof}
%
%
Fix $\epsilon > 0$, $b \in ([0,1)^n \cap \mathbb Q^n)\setminus\{0\}$, a continuous,  minimal function $\pi$, and suitable $\delta_3 > 0$ satisfying~(\ref{eqn:delta_3}).  Define $L_\gamma := \frac{5\epsilon}{12}\max_{i=1}^{n+1} \| g^{(i)}_{\delta_3}\|_1$ (c.f.~(\ref{eqn:gauge})).  
Choose $\delta_4 > 0$ such that 
\begin{equation}\label{eqn:delta_4}
\delta_4 = \frac{\delta_3}{q} \leq \frac{\epsilon}{36}\Big(2 L_\gamma + \wt L \Big)^{-1} \quad \text{ for some }\quad q \in \mathbb N,
\end{equation} 
where $\wt L$ denotes the Lipschitz constant of some $\wt \pi$ satisfying the conclusions of Proposition~\ref{prop:pwl}.  
Note that we then have $b \in U_{\delta_3} \subseteq  U_{\delta_4}$.  Consider the $(n+1)$-slope fill-in $\pi_{fill-in}:\mathbb R^n \rightarrow \mathbb R$ of $\pi_{comb}$ (c.f.~(\ref{eqn:pi_comb})) with respect to $U_{\delta_4}$ and $\frac{5\epsilon}{12}\gamma_{\delta_3}$:
\begin{equation}\label{eqn:pi_fill_in}
\pi_{fill-in}(x) := \min_{u \in U_{\delta_4}}\Big\{ \pi_{comb}(u) + \frac{5\epsilon}{12}\gamma_{\delta_3}(x-u)\Big\}.
 \end{equation}

We conclude this subsection with the following lemma, in which we collect a number of facts on $\pi_{fill-in}$ to be used in the proof of subsequent results.
%
%
\begin{lemma}\label{lem:pi_fill_in}
Fix $\epsilon > 0$, $b \in ([0,1)^n \cap \mathbb Q^n)\setminus\{0\}$, and a continuous,  minimal function $\pi$.  Let $\wt\pi$ be any function that meets the conclusions of Proposition~\ref{prop:pwl}
and choose suitable $\delta_3,\delta_4 > 0$ satisfying~(\ref{eqn:delta_3}) and~(\ref{eqn:delta_4}).  Construct $\pi_{comb}$ via~(\ref{eqn:pi_comb}).  
Then the function $\pi_{fill-in}$ defined in~(\ref{eqn:pi_fill_in}) 
(i) is a $(n+1)$-slope, subadditive function that satisfies (C1) and $\pi_{fill-in} \geq \pi_{comb}$,
(ii) satisfies $\|\pi_{comb} - \pi_{fill-in} \|_\infty \leq \frac{\epsilon}{36}$, and 
(iii) satisfies $\pi_{fill-in}(x) = \pi_{comb}(x) =\frac{5\epsilon}{12} \gamma_{\delta_3}(x-z)$ if $x \in \Lambda_{\delta_3,z}$ for some $z \in \mathbb Z^n$.
\end{lemma}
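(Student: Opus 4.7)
The plan is to invoke Lemma~\ref{lem:fill_in} directly for most of (i), obtain (ii) from a one-line Lipschitz estimate calibrated by~(\ref{eqn:delta_4}), and obtain (iii) by evaluating $\pi_{comb}$ explicitly on $\Lambda_{\delta_3,z}$.

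For (i), I would first verify the hypotheses of Lemma~\ref{lem:fill_in}. The function $\pi_{comb}$ is subadditive by Proposition~\ref{prop:pi_comb}(ii). The gauge $\gamma_{\delta_3}$ is sublinear by Lemma~\ref{lem:gamma_delta}, and the representation $\gamma_{\delta_3}(x)=\max_{i\in[n+1]}\langle g^{(i)}_{\delta_3},x\rangle$ from~(\ref{eqn:gauge}) shows that $\tfrac{5\epsilon}{12}\gamma_{\delta_3}$ is a sublinear $(n+1)$-slope function. The inequality $\pi_{comb}\leq \tfrac{5\epsilon}{12}\gamma_{\delta_3}$ is exactly Proposition~\ref{prop:pi_comb}(iv). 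Lemma~\ref{lem:fill_in} then yields the $(n+1)$-slope, subadditive properties together with $\pi_{fill-in}\geq \pi_{comb}$ and $\pi_{fill-in}|_{U_{\delta_4}}=\pi_{comb}|_{U_{\delta_4}}$. Since $\delta_4=(pq)^{-1}$ we have $\mathbb{Z}^n\subseteq U_{\delta_4}$, so $\pi_{fill-in}(z)=\pi_{comb}(z)=0$ for each $z\in\mathbb{Z}^n$, and $\pi_{fill-in}\geq \pi_{comb}\geq 0$ gives (C1).

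For (ii), only the upper bound is needed since $\pi_{fill-in}\geq \pi_{comb}$ from (i). Given any $x\in\mathbb{R}^n$, pick $u\in U_{\delta_4}$ with $\|x-u\|_\infty\leq \delta_4$, so that
\[
\pi_{fill-in}(x)-\pi_{comb}(x)\;\leq\;\bigl(\pi_{comb}(u)-\pi_{comb}(x)\bigr)+\tfrac{5\epsilon}{12}\gamma_{\delta_3}(x-u).
\]
From the convex-combination structure~(\ref{eqn:pi_comb}), Proposition~\ref{prop:pwl}(ii), and the representation~(\ref{eqn:min_pi_delta}) together with Lemma~\ref{lem:gamma_delta}, one verifies that $\pi_{comb}$ is Lipschitz with constant at most $\wt L+L_\gamma$ (with respect to the $\infty$-norm). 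Similarly, $\tfrac{5\epsilon}{12}\gamma_{\delta_3}(x-u)\leq L_\gamma\|x-u\|_\infty$ using $\gamma_{\delta_3}(v)\leq (\max_i\|g^{(i)}_{\delta_3}\|_1)\|v\|_\infty$. Summing, the difference is at most $(\wt L+2L_\gamma)\delta_4\leq \epsilon/36$ by~(\ref{eqn:delta_4}).

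For (iii), suppose $x\in\Lambda_{\delta_3,z}$ for some $z\in\mathbb{Z}^n$. By~(\ref{eqn:subset}), $x\in \bar B^\infty_{\wt\delta}(z)$, so Proposition~\ref{prop:pwl}(vii) gives $\wt\pi(x)=0$. The definition~(\ref{eqn:pi_delta}) gives $\pi_{\delta_3}(x)=\tfrac{1}{2}\gamma_{\delta_3}(x-z)$, whence~(\ref{eqn:pi_comb}) yields $\pi_{comb}(x)=\tfrac{5\epsilon}{12}\gamma_{\delta_3}(x-z)$. Since $z\in\mathbb{Z}^n\subseteq U_{\delta_4}$ with $\pi_{comb}(z)=0$, taking $u=z$ in~(\ref{eqn:pi_fill_in}) gives $\pi_{fill-in}(x)\leq \tfrac{5\epsilon}{12}\gamma_{\delta_3}(x-z)=\pi_{comb}(x)$; combined with $\pi_{fill-in}\geq \pi_{comb}$ from (i), we conclude equality throughout. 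The main technical points are already encoded in the preceding propositions: Proposition~\ref{prop:pi_comb}(iv) makes the fill-in via $\tfrac{5\epsilon}{12}\gamma_{\delta_3}$ legitimate, and the calibration of $\delta_4$ in~(\ref{eqn:delta_4}) handles the approximation bound, so no new obstacles are expected.
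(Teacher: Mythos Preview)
Your proposal is correct and follows essentially the same route as the paper's proof: invoke Lemma~\ref{lem:fill_in} for (i), use a Lipschitz/closest-grid-point estimate calibrated by~(\ref{eqn:delta_4}) for (ii), and plug $u=z$ into~(\ref{eqn:pi_fill_in}) together with $\wt\pi(x)=0$ on $\Lambda_{\delta_3,z}$ for (iii). Your treatment of (ii) is marginally cleaner, since you bound $\pi_{fill-in}(x)$ directly from its definition rather than via a separate Lipschitz constant for $\pi_{fill-in}$, but the argument and constants are the same.
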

%
%
\begin{proof}
(i)  Observe that $\pi_{comb}$ is subadditive as $\pi_{comb}$ is  minimal (c.f.~(ii) of Proposition~\ref{prop:pi_comb}), $\frac{5\epsilon}{12}\gamma_{\delta_3}$ is a sublinear, $(n+1)$-slope function (c.f.~(\ref{eqn:gamma_delta}) and~Lemma~\ref{lem:gamma_delta}), $U_{\delta_4} = \frac{1}{pq} \mathbb Z^n$ for some $pq \in \mathbb N$ (c.f.~(\ref{eqn:delta_3}) and~(\ref{eqn:delta_4})), and $\frac{5\epsilon}{12}\gamma_{\delta_3}(x) \geq \pi_{comb}(x)$ for all $x \in \mathbb R^n$ (c.f.~(iv) of Proposition~\ref{prop:pi_comb}).  By Lemma~\ref{lem:fill_in}, $\pi_{fill-in}$ is a $(n+1)$-slope, subadditive function such that $\pi_{fill-in} \geq \pi_{comb}$.  To see that $\pi_{fill-in}$ satisfies (C1), note that $\pi_{fill-in}(z) = 0$ for each $z \in \mathbb Z^n \subseteq U_{\delta_4}$ and $0 \leq \pi_{comb} \leq \pi_{fill-in}$, both by Lemma~\ref{lem:fill_in}.

(ii)  
By (ii) of Proposition~\ref{prop:pwl},~(\ref{eqn:gauge}),~(\ref{eqn:pi_comb}), and  \cite[Proposition 2.2.7]{scholtes2012introduction}, $\pi_{comb}$ is Lipschitz continuous, with Lipschitz constant $(L_\gamma + \wt L)$, with respect to the $\infty$-norm.  
Similarly, by~(\ref{eqn:gauge}),~(\ref{eqn:pi_fill_in}), and  \cite[Proposition 2.2.7]{scholtes2012introduction}, $\pi_{fill-in}$ is Lipschitz continuous with Lipschitz constant $L_\gamma$, with respect to the $\infty$-norm.    
Therefore, for $x \in \mathbb R^n$, we may choose $u \in U_{\delta_4}$ such that by Lemma~\ref{lem:fill_in} and~(\ref{eqn:delta_4}),
\begin{align*}
&|\pi_{comb}(x) - \pi_{fill-in}(x)| \\
&\qquad\qquad \leq |\pi_{comb}(x) - \pi_{comb}(u)| + |\pi_{comb}(u) - \pi_{fill-in}(u)| + |\pi_{fill-in}(u) - \pi_{fill-in}(x)|\\
&\qquad\qquad \leq  (L_\gamma + \wt L) \|x-u\|_\infty + L_\gamma \|u-x \|_\infty \leq \Big(2 L_\gamma + \wt L \Big) \delta_4 \leq \frac{\epsilon}{36}.
\end{align*}

(iii)
By Lemma~\ref{lem:fill_in}, we have that $\pi_{comb}(x) \leq \pi_{fill-in}(x)$ for all $x \in \mathbb R^n$.  Suppose that $x \in \Lambda_{\delta_3,z}$ for some $z \in \mathbb Z^n$.  
In view of~(\ref{eqn:case_3}),~(\ref{eqn:delta_3}), and (vii) of Proposition~\ref{prop:pwl}, $\wt \pi(x) = 0$.
%
Observe that
\begin{align*}
\pi_{fill-in}(x) & \leq  \pi_{comb}(z) + \frac{5\epsilon}{12} \gamma_{\delta_3}(x-z)  \;\; =  0 + \frac{5\epsilon}{6} \cdot \frac12 \gamma_{\delta_3}(x-z) \\
& =  \Big(1-\frac{5\epsilon}{6}\Big) \wt\pi(x) +  \frac{5\epsilon}{6} \pi_{\delta_3}(x)     = \pi_{comb}(x),
\end{align*}
where the first inequality follows from~(\ref{eqn:pi_fill_in}), the first equality follows from the fact that $\pi_{comb}$ is minimal by (ii) of Proposition~\ref{prop:pi_comb} (implying that $\pi_{comb}(z)=0$), the second equality follows from the fact that $\wt \pi(x) = 0$ and~(\ref{eqn:pi_delta}), and the last equality follows from~(\ref{eqn:pi_comb}). Thus, $\pi_{fill-in}(x) = \pi_{comb}(x) = \frac{5\epsilon}{6} \pi_{\delta_3}(x) = \frac{5\epsilon}{12} \gamma_{\delta_3}(x-z)$.
\end{proof}
%
%
\subsection{Symmetrization}
%
%
Although $\pi_{fill-in}: \mathbb R^n \rightarrow \mathbb R$ is subadditive, satisfies (C1), and is a $(n+1)$-slope function, the $(n+1)$-slope fill-in procedure used to form $\pi_{fill-in}$ in~(\ref{eqn:pi_fill_in}) likely does not preserve the symmetry property (C2) held by $\pi_{comb}$.  We must therefore make use of additional constructions to symmetrize $\pi_{fill-in}$, just as the authors do in~\cite{basu2016minimal}, in the special case when $n=1$.  
Moreover, in~\cite{basu2016minimal},  the authors choose a set $B \subseteq \mathbb R$ such that, roughly, $B$ and $b-B$ form a partition of $\mathbb R$; to reintroduce symmetry, they form $\pi_{sym}:\mathbb R \rightarrow \mathbb R$ such that $\pi_{sym}(x) = \pi_{fill-in}(x)$ for all $x \in B$ and $\pi_{sym}(x) = 1-\pi_{fill-in}(b-x)$ for all $x\in b-B$.  Because of the relatively simple nature of the underlying polyhedral complex of $\pi_{fill-in}$ when $n = 1$, it is not hard to find $B$ such that the function $\pi_{sym}$ inherits the piecewise linearity and continuity of $\pi_{fill-in}$.  However, the more complicated geometry of the underlying polyhedral complex of $\pi_{fill-in}$ when $n >1$ makes the selection of such a set $B$ much more difficult.  Additionally, even once $B$ is selected, the construction of $\pi_{sym}$ is more elaborate when $n > 1$.  

To aid us with the selection of the set $B$ and the construction of $\pi_{sym}$ when $n > 1$, we assemble two additional functions: $\pi_{aux}:\mathbb R^n \rightarrow \mathbb R$, a  minimal, $(n+1)$-slope function with gradients $\delta_3 g_{\delta_3}^{(1)},\dots,\delta_3 g_{\delta_3}^{(n+1)}$ (c.f.~(\ref{eqn:pi_aux})), and $\eta_{aux}:\mathbb R^n \rightarrow \mathbb R$ (c.f.~(\ref{eqn:eta})), which is obtained via scaling and translating $\pi_{aux}$.

Recall the gauge function $\gamma_{\delta_3}$ given by~(\ref{eqn:gamma_delta}) for $\delta_3 > 0$ and construct
$\pi_{aux}:\mathbb R^n \rightarrow \mathbb R$ via 
\begin{equation}\label{eqn:pi_aux}
\pi_{aux}(x) := \delta_3 \cdot \min_{z \in \mathbb Z^n} \gamma_{\delta_3}(x-z).
\end{equation}
%
The next lemma yields several useful properties of $\pi_{aux}$ to be used in the proof of Lemma~\ref{lem:eta}.  
%
%
\begin{lemma}\label{lem:pi_aux}
Fix $\delta_3 > 0$, $b \in ([0,1)^n \cap \mathbb Q^n)\setminus\{0\}$, and suppose that $2 \leq n \in \mathbb N$.  Then the function $\pi_{aux}:\mathbb R^n \rightarrow \mathbb R$ defined in~(\ref{eqn:pi_aux}) is a  minimal function.  Furthermore, $\pi_{aux}$ is a $(n+1)$-slope function with gradients $\delta_3 g_{\delta_3}^{(1)},\,\dots,\,\delta_3 g_{\delta_3}^{(n+1)}$.
\end{lemma}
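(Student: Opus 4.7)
The plan is to verify all three minimality conditions (C1)--(C3) for $\pi_{aux}$, together with the $(n+1)$-slope claim, directly from the defining formula~(\ref{eqn:pi_aux}) and the explicit sublinear representation $\gamma_{\delta_3}(y) = \max_{i \in [n+1]} \langle g^{(i)}_{\delta_3}, y\rangle$ provided by Lemma~\ref{lem:gamma_delta}. First I would observe that $\pi_{aux}$ is $\mathbb Z^n$-periodic by reindexing the minimum in~(\ref{eqn:pi_aux}). Since $\gamma_{\delta_3}$ is the gauge of a compact set containing $0$ in its interior, it grows linearly away from the origin, so only finitely many $z \in \mathbb Z^n$ attain the minimum on any bounded region; combined with periodicity, this makes $\pi_{aux}$ globally piecewise linear. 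On any full-dimensional region where both the arg-min over $z$ and the arg-max over $i$ in $\gamma_{\delta_3}$ are constant, $\pi_{aux}(x) = \delta_3\langle g^{(i)}_{\delta_3}, x - z\rangle$, so its gradient is $\delta_3 g^{(i)}_{\delta_3}$. Thus the only possible gradients are $\{\delta_3 g^{(i)}_{\delta_3}\}_{i=1}^{n+1}$, and all of them actually occur, e.g.\ in a small neighborhood of any lattice point $z$, where $\pi_{aux}$ coincides with $\delta_3 \gamma_{\delta_3}(\cdot - z)$ and the latter already realizes all $n+1$ slopes.

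Conditions (C1) and (C3) follow routinely. For (C1): $\gamma_{\delta_3} \geq 0$ since $0 \in \intr(\Lambda_{\delta_3,0})$, and $\pi_{aux}(z) = \delta_3 \gamma_{\delta_3}(0) = 0$ for $z \in \mathbb Z^n$ by choosing that same $z$ in the minimum. For (C3), if $z_1, z_2$ attain the minima in $\pi_{aux}(x)$ and $\pi_{aux}(y)$, sublinearity of $\gamma_{\delta_3}$ (Lemma~\ref{lem:gamma_delta}) gives
\[
\pi_{aux}(x) + \pi_{aux}(y) = \delta_3\bigl(\gamma_{\delta_3}(x - z_1) + \gamma_{\delta_3}(y - z_2)\bigr) \geq \delta_3\,\gamma_{\delta_3}\!\bigl((x+y) - (z_1 + z_2)\bigr) \geq \pi_{aux}(x+y),
\]
where the last step uses $z_1 + z_2 \in \mathbb Z^n$.

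The main obstacle is condition (C2), namely $\pi_{aux}(x) + \pi_{aux}(b-x) = 1$, which I would prove as two inequalities. For ``$\geq$'', the crucial auxiliary claim is that $\gamma_{\delta_3}(b - z) \geq 1/\delta_3$ for every $z \in \mathbb Z^n$. From Lemma~\ref{lem:gamma_delta}, $\langle g^{(i)}_{\delta_3}, b - z \rangle = (z_i - b_i)/(\delta_3(1 - b_i))$ for $i \in [n]$, which is $\geq 1/\delta_3$ iff $z_i \geq 1$; and $\langle g^{(n+1)}_{\delta_3}, b - z\rangle = (\sum_j b_j - \sum_j z_j)/(\delta_3 \sum_j b_j)$, which is $\geq 1/\delta_3$ iff $\sum_j z_j \leq 0$. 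For any integer vector $z$, if no coordinate satisfies $z_i \geq 1$ then every $z_i \leq 0$, forcing $\sum_j z_j \leq 0$, so at least one lower bound must hold. Combined with sublinearity, this yields $\pi_{aux}(x) + \pi_{aux}(b-x) \geq \delta_3\gamma_{\delta_3}(b - (z_1 + z_2)) \geq 1$ for any minimizers $z_1, z_2$. For the reverse ``$\leq$'', the plan is to exhibit, for each $x$, explicit $z^\ast, w^\ast \in \mathbb Z^n$ with $\gamma_{\delta_3}(x - z^\ast) + \gamma_{\delta_3}(b - x - w^\ast) = 1/\delta_3$. The natural strategy is to let $z^\ast$ be a minimizer for $\pi_{aux}(x)$, identify the face of $\Lambda_{\delta_3,0}$ on which the ray through $x - z^\ast$ exits $\Lambda_{\delta_3,0}$, and select $w^\ast$ so that $b - x - w^\ast$ is a nonnegative multiple of $x - z^\ast$ with $b - z^\ast - w^\ast$ hitting the boundary $\{\gamma_{\delta_3}(\cdot) = 1/\delta_3\}$; the equality case of the sublinearity of the gauge (valid precisely when the two arguments lie on a common ray from $0$) then gives equality. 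Concretely, this reduces to partitioning a fundamental domain of $\mathbb Z^n$ according to the cells of the tiling-like covering $\{z + (1/\delta_3)\Lambda_{\delta_3,0}\}_{z \in \mathbb Z^n}$ of $\mathbb R^n$ (first verifying that this is indeed a covering, so the needed $z^\ast$ always exists) and checking the identity cell-by-cell. This face-matching and covering argument is the chief technical difficulty; once it is in place, (C2) holds and, together with the items above, the lemma follows.
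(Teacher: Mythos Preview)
Your proposal and the paper's proof are quite different in character. The paper's argument is three lines: it recognizes $\delta_3\gamma_{\delta_3}$ as the gauge of the translate $K+b-\onebld$ of the standard simplex $K=\text{conv}(\{0,ne^{(1)},\dots,ne^{(n)}\})$, and then simply invokes~\cite[Theorem~12]{ccz}, which asserts that $\min_{z\in\Z^n}\gamma_1(\cdot-z)$ is an $(n+1)$-slope extreme (hence minimal) function for this particular simplex. All of (C1)--(C3) come for free from that citation.

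Your direct verification is more self-contained, and the parts you actually carry out are correct: (C1), (C3), the $(n+1)$-slope description, and the ``$\geq$'' half of (C2) are all fine. Your computation that $\gamma_{\delta_3}(b-z)\ge 1/\delta_3$ for every $z\in\Z^n$ is clean and is exactly the right lemma for that half.

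The genuine gap is in the ``$\leq$'' half of (C2). Your stated plan---choose $w^\ast$ so that $b-x-w^\ast$ is a \emph{nonnegative multiple} of $x-z^\ast$---cannot work: forcing $b-x-w^\ast=\lambda(x-z^\ast)$ for a specific $\lambda>0$ determines $w^\ast=b-(1+\lambda)x+\lambda z^\ast$, and for generic $x$ this is not a lattice point. What you actually need is the weaker condition that $b-x-w^\ast$ lies in the same \emph{linearity cone} $C_i=\{y:\langle g^{(i)}_1,y\rangle=\gamma_1(y)\}$ as $x-z^\ast$, since equality in the subadditivity of the gauge holds precisely when both arguments share an active index, not only when they are parallel. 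The key geometric fact that makes this work is that each facet $F_i$ of $\Lambda_{1,0}$ contains a point of $b+\Z^n$ (namely $b-e^{(i)}$ for $i\in[n]$ and $b$ for $i=n+1$), so one can take $w^\ast$ with $b-z^\ast-w^\ast$ equal to one of these; but verifying that $p_i-(x-z^\ast)\in C_i$ whenever $x-z^\ast\in C_i\cap\Lambda_{1,0}$ still requires care and does not hold for an arbitrary choice of $z^\ast$ in the covering. Your closing remark that ``this reduces to \dots checking the identity cell-by-cell'' is honest about the missing work, and that cell-by-cell check is essentially what the cited theorem in~\cite{ccz} packages. If you want a fully self-contained proof, you should either complete that verification explicitly or, more economically, establish that $\Lambda_{1,0}$ is a maximal lattice-free simplex with a point of $b+\Z^n$ in the relative interior of every facet and then appeal to the standard characterization of minimality for functions of the form $\min_z\gamma(\cdot-z)$.
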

%
%
\begin{proof} 
Consider the (convex) simplex $K := \text{conv}(\{0,ne^{(1)},\dots,ne^{(n)}\})$.  
Define $\Lambda_{1,0} = K+b-\onebld$, so that $\gamma_1:= \delta_3 \cdot \gamma_{\delta_3}$ is the gauge function for $\Lambda_{1,0}$.  By~\cite[Theorem 12]{ccz}, the function $\pi_{aux}(x) = \min_{z \in \Z^n} \gamma_1(x-z)$ is a $(n+1)$-slope, extreme (and therefore minimal) function with gradients $\delta_3 g_{\delta_3}^{(1)},\,\dots,\,\delta_3 g_{\delta_3}^{(n+1)}$; alternate proofs of this fact were obtained in the subsequent literature by different authors; see the survey~\cite{basu2015geometric}. 
\end{proof}
%
%
We next use $\pi_{aux}$ to construct the function $\eta_{aux}$, which will be utilized in the symmetrization of $\pi_{fill-in}$ (c.f.~(\ref{eqn:pi_sym})).

For $\epsilon > 0$, $b \in ([0,1)^n \cap \mathbb Q^n)\setminus\{0\}$,  and $\delta_3 > 0$, choose $\tau \in \mathbb N$ such that 
\begin{equation}\label{eqn:tau}
\tau^{-1} \leq  \frac{6\delta_3}{5\epsilon}\quad \text{and} \quad \tau b = b+z\quad \text{for some} \quad z \in \mathbb Z^n. 
\end{equation}
To see that such a $\tau$ exists,  let $s$ denote some common multiple of the denominators of the components of $b$ (in whatever form the components are written in, whether in lowest terms or not). 
Then let $\tau = 1 + ms$ for some $m \in \mathbb N$ such that $\frac{1}{\tau} \leq  \frac{6\delta_3}{5\epsilon}$.  We then have $\tau b = b + msb = b +  z$, where $z := msb \in \mathbb Z^n$ by choice of $s$.
Define $\eta_{aux}:\mathbb R^n \rightarrow \mathbb R$ such that 
\begin{equation}\label{eqn:eta}
\eta_{aux}(x) := \frac{1}{2} + \frac{5\epsilon}{24\delta_3 \tau}  -  \frac{5\epsilon}{12\delta_3 \tau} \pi_{aux}(\tau(b-x)).
\end{equation}

In the next lemma, we gather several properties of the function $\eta_{aux}$.
%
%
\begin{lemma}\label{lem:eta}
Fix $\epsilon > 0$, $b \in ([0,1)^n \cap \mathbb Q^n)\setminus\{0\}$, $\delta_3 > 0$, and choose suitable  $\tau \in \mathbb N$ (c.f.~(\ref{eqn:tau})).  
The function $\eta_{aux}:\mathbb R^n \rightarrow \mathbb R$ given by~(\ref{eqn:eta}) is a $(n+1)$-slope function with gradients $\frac{5\epsilon}{12}g_{\delta_3}^{(1)},\dots,\frac{5\epsilon}{12}g_{\delta_3}^{(n+1)}$.  
Furthermore, the function $\eta_{aux}$ satisfies the symmetry property (C2).  Finally, $\eta_{aux}(x) \in [\frac{1}{4},\frac{3}{4}]$ for all $x \in \mathbb R^n$.
\end{lemma}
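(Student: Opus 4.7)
The plan is to read off each of the three conclusions directly from the definition of $\eta_{aux}$ in \eqref{eqn:eta} together with the properties of $\pi_{aux}$ already established in Lemma~\ref{lem:pi_aux}: namely, $\pi_{aux}$ is a minimal $(n+1)$-slope function with gradients $\delta_3 g_{\delta_3}^{(1)},\dots,\delta_3 g_{\delta_3}^{(n+1)}$, periodic with respect to $\mathbb{Z}^n$, satisfying (C2), and taking values in $[0,1]$. The key is that $\eta_{aux}$ is obtained by an affine transformation of the composition $x \mapsto \pi_{aux}(\tau(b-x))$, so everything transfers cleanly once we account for the chain-rule scaling and the choice of $\tau$.

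For the slope claim, I would apply the chain rule: the gradient (where it exists) of $x \mapsto \pi_{aux}(\tau(b-x))$ equals $-\tau \nabla \pi_{aux}(\tau(b-x))$, which ranges over the set $\{-\tau \delta_3 g_{\delta_3}^{(i)} : i \in [n+1]\}$ by Lemma~\ref{lem:pi_aux}. Multiplying by the prefactor $-\tfrac{5\epsilon}{12\delta_3 \tau}$ in \eqref{eqn:eta} cancels the $\tau$ and the $\delta_3$, leaving the $(n+1)$ gradients $\tfrac{5\epsilon}{12}g_{\delta_3}^{(1)},\dots,\tfrac{5\epsilon}{12}g_{\delta_3}^{(n+1)}$, as claimed.

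For the symmetry property, the plan is to compute $\eta_{aux}(x)+\eta_{aux}(b-x)$ directly and to show it equals $1$. The constant term contributes $1 + \tfrac{5\epsilon}{12\delta_3\tau}$, and the remaining task is to check that $\pi_{aux}(\tau(b-x)) + \pi_{aux}(\tau x) = 1$. Using $\tau b = b + z$ for some $z \in \mathbb{Z}^n$ (c.f.~\eqref{eqn:tau}) together with the $\mathbb{Z}^n$-periodicity of $\pi_{aux}$, the first summand rewrites as $\pi_{aux}(b - \tau x)$, and then (C2) for $\pi_{aux}$ gives the required identity. Multiplying by the factor $-\tfrac{5\epsilon}{12\delta_3\tau}$ and combining with the constant term yields exactly $1$. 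The one subtlety is recognizing that the specific form of the condition $\tau b \equiv b \pmod{\mathbb Z^n}$ is precisely what is needed here; without it, the periodicity trick would not collapse $\pi_{aux}(\tau(b-x))$ to $\pi_{aux}(b-\tau x)$.

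Finally, the range bound is immediate from $0 \le \pi_{aux} \le 1$ (which holds by (C1) and (C2) for $\pi_{aux}$): the affine map $t \mapsto \tfrac{1}{2} + \tfrac{5\epsilon}{24\delta_3\tau} - \tfrac{5\epsilon}{12\delta_3\tau} t$ sends $[0,1]$ to $[\tfrac{1}{2} - \tfrac{5\epsilon}{24\delta_3\tau},\, \tfrac{1}{2} + \tfrac{5\epsilon}{24\delta_3\tau}]$, and the constraint $\tau^{-1} \le \tfrac{6\delta_3}{5\epsilon}$ from \eqref{eqn:tau} gives $\tfrac{5\epsilon}{24\delta_3\tau} \le \tfrac{1}{4}$, placing this interval inside $[\tfrac{1}{4},\tfrac{3}{4}]$. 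None of the three steps appears to be a genuine obstacle; the lemma is essentially a verification that the numerical constants and the choice of $\tau$ in \eqref{eqn:tau} have been calibrated precisely to yield these three conclusions, so the bulk of the work is a careful unpacking of the definitions.
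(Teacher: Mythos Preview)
Your proposal is correct and follows essentially the same route as the paper's proof: the chain rule for the slope claim, the computation $\eta_{aux}(x)+\eta_{aux}(b-x)$ combined with $\tau b = b+z$ and the periodicity/symmetry of $\pi_{aux}$ for (C2), and the range bound via $\pi_{aux}\in[0,1]$ together with $\tau^{-1}\le \tfrac{6\delta_3}{5\epsilon}$. If anything, your write-up spells out the chain-rule computation and the affine-image-of-$[0,1]$ argument more explicitly than the paper does.
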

%
%
\begin{proof}
Note that by virtue of Lemma~\ref{lem:pi_aux},~(\ref{eqn:eta}), and the chain rule, $\eta_{aux}$ is an $(n+1)$-slope function with gradients $\frac{5\epsilon}{12} g_{\delta_3}^{(1)},\dots,\frac{5\epsilon}{12}g_{\delta_3}^{(n+1)}$.  
To see that $\eta_{aux}$ satisfies property (C2), note that by~(\ref{eqn:tau}), 
\begin{align*}
\eta_{aux}(x) + \eta_{aux}(b-x) &= 1 + \frac{5\epsilon}{12\delta_3 \tau}  -  \frac{5\epsilon}{12\delta_3 \tau} \pi_{aux}(\tau(b-x)) - \frac{5\epsilon}{12\delta_3 \tau} \pi_{aux}(\tau x) \\
					     &= 1 + \frac{5\epsilon}{12\delta_3 \tau} - \frac{5\epsilon}{12\delta_3 \tau} \pi_{aux}(b + z -\tau x) - \frac{5\epsilon}{12\delta_3 \tau} \pi_{aux}(\tau x) = 1,
\end{align*}
as $\pi_{aux}$ is  minimal, and therefore is periodic with respect to $\mathbb Z^n$ and satisfies (C2).  
Finally, note for all $x \in \mathbb R^n$, we have that $\pi_{aux}(x) \in [0,1]$, as $\pi_{aux}$ is  minimal, and hence satisfies (C1) and (C2).  Combining this with the choice of $\tau \in \mathbb N$ that satisfies $\frac{1}{\tau} \leq  \frac{6\delta_3}{5\epsilon}$ (c.f.~(\ref{eqn:tau})), we have that $\eta_{aux}(x) \in [\frac{1}{4},\frac{3}{4}]$ for all $x \in \mathbb R^n$.
This completes the proof.
\end{proof}
%
%
We may now use the function $\eta_{aux}$ from~(\ref{eqn:eta}) to symmetrize $\pi_{fill-in}$.  Recall the  minimal  (c.f.~Proposition~\ref{prop:pi_comb}) function $\pi_{comb}$ defined in~(\ref{eqn:pi_comb}). 
Define $\pi_{sym}:\mathbb R^n \rightarrow \mathbb R$ such that for each $x \in \mathbb R^n$,
\begin{equation}\label{eqn:pi_sym}
\pi_{sym}(x) := 
\begin{cases}
\min(\pi_{fill-in}(x) ,\eta_{aux}(x))	    &\text{ if }  \pi_{comb}(x) < \eta_{aux}(x) \\
1 - \min(\pi_{fill-in}(b-x) ,\eta_{aux}(b-x))  &\text{ if }  \pi_{comb}(x) > \eta_{aux}(x) \\
\eta_{aux}(x) &\text{ if }  \pi_{comb}(x) = \eta_{aux}(x).
\end{cases}
\end{equation}
%
%
We will demonstrate that $\pi_{sym}$ is an extreme function in Proposition~\ref{prop:pi_sym}.  However, we first make the following definition.  
%
%
\begin{definition}\cite{basu-hildebrand-koeppe-molinaro:k+1-slope}\label{def:genuine}
A function $\theta:\mathbb R^n \rightarrow \mathbb R$ is {\bf genuinely $n$-dimensional} if there does not exist a function $\varphi: \mathbb R^{n-1}\rightarrow \mathbb R$ and a linear map $A:\mathbb R^n \rightarrow \mathbb R^{n-1} $ such that $\theta = \varphi \circ A$.
\end{definition}
%
%
We are now ready to show that $\pi_{sym}$ is extreme.  
%
%
\begin{proposition}\label{prop:pi_sym}
Fix $\epsilon \in (0,\frac{1}{2}]$, $b \in ([0,1)^n \cap \mathbb Q^n)\setminus\{0\}$, and a continuous,  minimal function $\pi:\mathbb R^n \rightarrow \mathbb R$.  Let $\wt\pi$ be any function that meets the conclusions of Proposition~\ref{prop:pwl} and choose suitable $\delta_3,\delta_4 > 0$ (c.f.~(\ref{eqn:delta_3}) and~(\ref{eqn:delta_4})).  Construct $\pi_{comb}$ via~(\ref{eqn:pi_comb}) and $\pi_{fill-in}$ via~(\ref{eqn:pi_fill_in}).  Choose $\tau \in \mathbb N$ to satisfy~(\ref{eqn:tau}).
Consider the function $\pi_{sym}: \mathbb R^n \rightarrow \mathbb R$ given by~(\ref{eqn:pi_sym}).  We then have that $\|\pi_{fill-in} - \pi_{sym} \|_\infty \leq \epsilon/18$.  Furthermore, $\pi_{sym}$ is (i) a piecewise linear $(n+1)$-slope function, (ii)  minimal, and (iii) genuinely $n$-dimensional.  Hence, $\pi_{sym}$ is an extreme function of the set of  minimal functions by~\cite[Theorem 1.7]{basu-hildebrand-koeppe-molinaro:k+1-slope}.    
\end{proposition}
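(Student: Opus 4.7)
The plan is to verify the three structural properties of $\pi_{sym}$ (piecewise linear with $(n+1)$ slopes, minimality, and genuine $n$-dimensionality) along with the norm bound $\|\pi_{fill-in} - \pi_{sym}\|_\infty \leq \epsilon/18$, and then invoke \cite[Theorem~1.7]{basu-hildebrand-koeppe-molinaro:k+1-slope} for extremality. The two principal inputs from earlier in the paper will be Lemma~\ref{lem:pi_fill_in} (especially the uniform bound $\|\pi_{fill-in} - \pi_{comb}\|_\infty \leq \epsilon/36$ and the equality $\pi_{fill-in}(x) = \frac{5\epsilon}{12}\gamma_{\delta_3}(x-z)$ on $\Lambda_{\delta_3,z}$), Proposition~\ref{prop:pi_comb}(iii) (the strict subadditivity slack $\Delta\pi_{comb}(x,y) \geq \epsilon/4$ away from the cells $\Lambda_{\delta_3,z}$ and $b - \Lambda_{\delta_3,z}$), and the gradient and range information for $\eta_{aux}$ from Lemma~\ref{lem:eta}.

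For the norm bound, I would do a case analysis on the three branches of (\ref{eqn:pi_sym}). In the branch $\pi_{comb}(x) < \eta_{aux}(x)$, the combination $\pi_{fill-in} \geq \pi_{comb}$ (Lemma~\ref{lem:pi_fill_in}(i)) and $\|\pi_{fill-in} - \pi_{comb}\|_\infty \leq \epsilon/36$ directly gives $|\pi_{sym}(x) - \pi_{fill-in}(x)| \leq \epsilon/36$. In the branch $\pi_{comb}(x) > \eta_{aux}(x)$, I split further on whether $\pi_{fill-in}(b-x) \leq \eta_{aux}(b-x)$: if yes, the approximate symmetry $|\pi_{fill-in}(x) + \pi_{fill-in}(b-x) - 1| \leq \epsilon/18$ (obtained by subtracting the (C2) identity for $\pi_{comb}$) closes the bound; if no, one observes $\eta_{aux}(b-x) - \pi_{comb}(b-x) \leq \epsilon/36$ and uses (C2) of both $\pi_{comb}$ and $\eta_{aux}$ to translate this into $\pi_{comb}(x) - \eta_{aux}(x) \leq \epsilon/36$, yielding the required estimate. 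The equality branch is immediate.

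The $(n+1)$-slope piecewise linear property rests on the observation that both $\pi_{fill-in}$ (Lemma~\ref{lem:pi_fill_in}(i) combined with Lemma~\ref{lem:gamma_delta}) and $\eta_{aux}$ (Lemma~\ref{lem:eta}) use the common gradient set $\{\tfrac{5\epsilon}{12}g_{\delta_3}^{(i)}\}_{i=1}^{n+1}$, and that both the pointwise minimum and the transformation $f \mapsto 1 - f(b - \cdot)$ preserve this gradient set. Continuity across the switching locus $\{\pi_{comb} = \eta_{aux}\}$ will be verified by noting that at such a point the (C2) identities for both $\pi_{comb}$ and $\eta_{aux}$ give $\pi_{comb}(b-x) = \eta_{aux}(b-x)$ as well, so the bound $\pi_{fill-in} \geq \pi_{comb}$ collapses all three branches of (\ref{eqn:pi_sym}) to the value $\eta_{aux}(x)$. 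Then (C1) of $\pi_{sym}$ follows because $\pi_{fill-in}(z) = 0$ and $\eta_{aux} \geq 1/4$ on $\mathbb{Z}^n$ (Lemma~\ref{lem:eta}), forcing branch one; and (C2) of $\pi_{sym}$ is verified directly by observing that the first and second branches swap under $x \leftrightarrow b - x$.

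The main obstacle is subadditivity (C3). I plan a case analysis based on which branch of (\ref{eqn:pi_sym}) governs each of $x$, $y$, and $x+y$, and further on whether $x$, $y$, or $x+y$ lies in any $\Lambda_{\delta_3,z}$ or $b - \Lambda_{\delta_3,z}$. In the ``generic'' subcases (none of the three points near the lattice or the translate), the slack $\Delta\pi_{comb}(x,y) \geq \epsilon/4$ from Proposition~\ref{prop:pi_comb}(iii), combined with the just-proved $\epsilon/18$ estimate and the subadditivity of $\pi_{fill-in}$ from Lemma~\ref{lem:pi_fill_in}(i), absorbs every perturbation. In the subcases where some argument lies near the lattice, the exact identification $\pi_{fill-in}(x) = \tfrac{5\epsilon}{12}\gamma_{\delta_3}(x-z)$ on $\Lambda_{\delta_3,z}$ from Lemma~\ref{lem:pi_fill_in}(iii) together with the sublinearity of $\gamma_{\delta_3}$ (Lemma~\ref{lem:gamma_delta}) supplies the needed inequalities; the cases near $b + \mathbb{Z}^n$ reduce to these via the symmetry branch. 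Finally, $\pi_{sym}$ is genuinely $n$-dimensional because its gradient set contains $\tfrac{5\epsilon}{12}g_{\delta_3}^{(i)} = -\tfrac{5\epsilon}{12\delta_3(1-b_i)} e^{(i)}$ for $i \in [n]$, which are linearly independent; any factorization $\pi_{sym} = \varphi \circ A$ with $A : \mathbb{R}^n \to \mathbb{R}^{n-1}$ linear would confine every gradient of $\pi_{sym}$ to the $(n-1)$-dimensional image of $A^\top$, a contradiction. With (i)--(iii) in hand, extremality of $\pi_{sym}$ follows from \cite[Theorem~1.7]{basu-hildebrand-koeppe-molinaro:k+1-slope}.
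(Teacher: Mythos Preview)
Your outline is largely aligned with the paper's proof, but the subadditivity argument in the ``near-lattice'' subcase is under-specified and, as stated, does not close. When $x \in \Lambda_{\delta_3,z}$ you correctly get $\pi_{sym}(x) = \tfrac{5\epsilon}{12}\gamma_{\delta_3}(x-z)$ from Lemma~\ref{lem:pi_fill_in}(iii), but you then need
\[
\pi_{sym}(x+y) - \pi_{sym}(y) \;\leq\; \tfrac{5\epsilon}{12}\,\gamma_{\delta_3}(x-z),
\]
and ``sublinearity of $\gamma_{\delta_3}$'' is not the right lever here. Sublinearity controls $\pi_{fill-in}$ through its inf-convolution structure, but $\pi_{sym}$ may be in its second branch at $y$ or at $x+y$, where it equals $1 - \min(\pi_{fill-in}(b-\cdot),\eta_{aux}(b-\cdot))$ and no direct sublinearity argument is available. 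The paper instead uses part~(i) itself: since every gradient of $\pi_{sym}$ lies in $\{\tfrac{5\epsilon}{12}g_{\delta_3}^{(i)}\}_{i=1}^{n+1}$ and $\gamma_{\delta_3}(v) = \max_i \langle g_{\delta_3}^{(i)}, v\rangle$ by Lemma~\ref{lem:gamma_delta}, integrating along the segment from $y$ to $y+(x-z)$ (and using periodicity of $\pi_{sym}$) gives
\[
\pi_{sym}(x+y)-\pi_{sym}(y) = \int_0^1 \big\langle \nabla\pi_{sym}(y+t(x-z)),\, x-z\big\rangle\,dt \;\leq\; \tfrac{5\epsilon}{12}\gamma_{\delta_3}(x-z).
\]
This global $(n+1)$-slope bound is the missing ingredient; once you have it, Case~3 ($x+y \in b-\Lambda_{\delta_3,z}$) reduces to it via (C2) exactly as you say.

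Two smaller points. Your treatment of (C1) addresses $\pi_{sym}(z)=0$ but not $\pi_{sym}\geq 0$; the paper proves nonnegativity directly (the second-branch case needs the observation that $\pi_{comb}(x)>\eta_{aux}(x)$ forces $x\notin\Lambda_{\delta_3,z}$, whence $\pi_{\delta_3}(x)\geq\tfrac12$), though in fact nonnegativity also follows a posteriori from subadditivity, $\pi_{sym}|_{\mathbb Z^n}=0$, and continuity. Your gradient-based argument for genuine $n$-dimensionality is different from the paper's point-evaluation argument (which exhibits $cw\in\Lambda_{\delta_3,0}$ with $\pi_{sym}(cw)>0=\pi_{sym}(0)$ for $w\in\ker A$); yours is valid too, but you should note explicitly that all $n+1$ gradients actually occur --- they do, in a neighborhood of the origin where $\pi_{sym}=\tfrac{5\epsilon}{12}\gamma_{\delta_3}$.
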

%
%
\begin{proof}  
We first show that $\|\pi_{fill-in} - \pi_{sym} \|_\infty \leq \epsilon/18$.  If $\pi_{comb}(x) < \eta_{aux}(x)$, then by~(\ref{eqn:pi_sym}), either $\pi_{sym}(x) = \pi_{fill-in}(x)$, or $\pi_{sym}(x) = \eta_{aux}(x)$.  Therefore, by Lemma~\ref{lem:pi_fill_in},
\[
|\pi_{sym}(x) - \pi_{fill-in}(x)| \leq \pi_{fill-in}(x) - \eta_{aux}(x) <  \pi_{fill-in}(x) - \pi_{comb}(x) \leq \frac{\epsilon}{36}.
\]

If $\pi_{comb}(x) > \eta_{aux}(x)$, then $\pi_{comb}(b-x) < \eta_{aux}(b-x)$, as both $\pi_{comb}$ and $\eta_{aux}$ satisfy (C2) (c.f.~Proposition~\ref{prop:pi_comb} and Lemma~\ref{lem:eta}).  Therefore, 
\begin{align*}
|\pi_{sym}(x) - \pi_{fill-in}(x)| &\leq  |\pi_{sym}(x) - \pi_{comb}(x)| + |\pi_{comb}(x) - \pi_{fill-in}(x)|\\
& \leq |1 - \min(\pi_{fill-in}(b-x) ,\eta_{aux}(b-x)) - 1 + \pi_{comb}(b-x)| + \frac{\epsilon}{36}\\
& \leq |\pi_{comb}(b-x) - \pi_{fill-in}(b-x)| + \frac{\epsilon}{36} \leq \frac{\epsilon}{18},
\end{align*}
where the first inequality follows from the triangle inequality, and the second inequality follows from~(\ref{eqn:pi_sym}) and the symmetry of $\pi_{comb}$ (to obtain the first term), as well as (ii) of Lemma~\ref{lem:pi_fill_in} (to obtain the second term); 
the next inequality then follows from the fact that both $\pi_{comb}(b-x) \leq \pi_{fill-in}(b-x)$ (by (i) of Lemma~(\ref{lem:pi_fill_in})) and $\pi_{comb}(b-x) < \eta_{aux}(b-x)$; the last inequality is a consequence of (ii) of Lemma~\ref{lem:pi_fill_in}.


%
Finally, if $\pi_{comb}(x) = \eta_{aux}(x)$, then by~(\ref{eqn:pi_sym}) and (ii) of Lemma~\ref{lem:pi_fill_in},
\[
|\pi_{sym}(x) - \pi_{fill-in}(x)| = |\pi_{comb}(x) - \pi_{fill-in}(x)| \leq \frac{\epsilon}{36}.
\] 
Thus, $\| \pi_{fill-in} - \pi_{sym}\|_\infty \leq \frac{\epsilon}{18}$.

Next, we establish (i), (ii), and (iii).  

(i) 
Let $\Pcal_1$, $\Pcal_2$,  $\Pcal_3$, $\Pcal_4$, and $\Pcal_5$ denote any five polyhedral complices compatible with the piecewise linear functions $p^{(1)}$, $p^{(2)}$, $p^{(3)}$, $\pi_{comb}$, and $\eta_{aux}$ respectively, where $p^{(1)}:\mathbb R^n \rightarrow \mathbb R$, $p^{(2)}:\mathbb R^n \rightarrow \mathbb R$, and $p^{(3)}:\mathbb R^n \rightarrow \mathbb R$ are given by 
\begin{align*}
p^{(1)}(x) &:= \min(\pi_{fill-in}(x) ,\eta_{aux}(x)),\\
p^{(2)}(x) &:= 1- \min(\pi_{fill-in}(b-x),\eta_{aux}(b-x)), \text{ and } \\
p^{(3)}(x) &:= \min(\pi_{comb}(x), \eta_{aux}(x)). 
\end{align*}
These complices must exist via~\cite[Proposition~2.2.3]{scholtes2012introduction}.  
Then define the polyhedral complex 
\[
\Pcal := \Big\{ \bigcap_{i=1}^5 P_i \,\Big|\, P_i \in \Pcal_i, i \in [5] \Big\},
\]
which is a common refinement of $\Pcal_1, \ldots, \Pcal_5$, and therefore compatible with each of $p^{(1)}$, $p^{(2)}$, $p^{(3)}$, $\pi_{comb}$, and $\eta_{aux}$.
Now, let $P \in \Pcal$; we shall show that $\pi_{sym}$ is affine on $P$.  Suppose that there exist $x,x^\prime \in P$ such that $\pi_{comb}(x) < \eta_{aux}(x)$ and $\pi_{comb}(x^\prime) > \eta_{aux}(x^\prime)$.  
By the intermediate value theorem, there must exist $t \in (0,1)$ such that $\ol x := t x + (1-t) x^\prime$ and $\pi_{comb}(\ol x) = \eta_{aux}(\ol x)$.  Since $\eta_{aux}$ and $p^{(3)}$ are affine on $P$, we must have
\[
\eta_{aux}(\ol x) = t\,\eta_{aux}(x) + (1-t)\, \eta_{aux}(x^\prime) > t\, p^{(3)}(x) + (1-t) \, p^{(3)}(x^\prime) = p^{(3)}(\ol x) = \eta_{aux}(\ol x),
\]
a contradiction.
%
Hence, either $\pi_{comb}(x) \leq \eta_{aux}(x)$ for all $x \in P$ or $\pi_{comb}(x) \geq \eta_{aux}(x)$ for all $x \in P$.  

If $\pi_{comb}(x) \leq \eta_{aux}(x)$ for all $x \in P$, then $\pi_{sym}(x) = p^{(1)}(x)$ for all $x \in P$ (c.f.~(\ref{eqn:pi_sym})); this holds even if $\pi_{comb}(x) = \eta_{aux}(x)$, as $\pi_{comb}(x) = \eta_{aux}(x)$ implies that $\pi_{sym}(x) = \eta_{aux}(x) = \pi_{comb}(x) \leq \pi_{fill-in}(x)$ (c.f.~Lemma~\ref{lem:pi_fill_in}), so that $\pi_{sym}(x) = \eta_{aux}(x) = p^{(1)}(x)$.  
Otherwise, $\pi_{comb}(x) \geq \eta_{aux}(x)$ for all $x \in P$, and it can be shown similarly that $\pi_{sym}(x) = p^{(2)}(x)$ for all $x \in P$.  In either case, $\pi_{sym}(x)$ is affine on $P$, since $p^{(1)}$ and $p^{(2)}$ are both compatible with $\Pcal$.  
Furthermore, since $\nabla p^{(1)}(x), \nabla p^{(2)}(x), \nabla \eta_{aux}(x) \in \{ \frac{5\epsilon}{12}g_{\delta_3}^{(1)}, \dots, \frac{5\epsilon}{12} g_{\delta_3}^{(n+1)}\}$, whenever the gradients exist, $\pi_{sym}$ is a piecewise linear $(n+1)$-slope function.

(ii)  To demonstrate that $\pi_{sym}$ is  minimal, we first show that this function satisfies (C1).  
We have via Proposition~\ref{prop:pi_comb}, Lemma~\ref{lem:pi_fill_in}, and Lemma~\ref{lem:eta} that $0 = \pi_{comb}(z) = \pi_{fill-in}(z) < \frac{1}{4} \leq \eta_{aux}(z)$ for all $z \in \mathbb Z^n$.  Hence, for all such $z \in \mathbb Z^n$, we have $\pi_{sym}(z)  = 0$, by~(\ref{eqn:pi_sym}).
Additionally, 
\begin{itemize}
\item[(1)] When $\pi_{comb}(x) < \eta_{aux}(x)$, we have
$\pi_{sym}(x) = \min(\pi_{fill-in}(x),\eta_{aux}(x)) \geq 0$,
by~(\ref{eqn:pi_sym}), Lemma~\ref{lem:pi_fill_in}, and Lemma~\ref{lem:eta}.
\item[(2)] Suppose $\pi_{comb}(x) > \eta_{aux}(x)$.  We claim that $x \notin \Lambda_{\delta_3,z}$ for any $z \in \mathbb Z^n$. If, to the contrary, $x \in \Lambda_{\delta_3,z}$ for some $z \in \mathbb Z^n$, then we arrive at the contradiction $\pi_{comb}(x) \leq \frac{5\epsilon}{12} < \frac{1}{4} \leq \eta_{aux}(x)$, where the three inequalities follow from (iii) in Lemma~\ref{lem:pi_fill_in}, the fact that $\epsilon \leq \frac12$, and Lemma~\ref{lem:eta} respectively. 
Hence, our claim holds and 
\begin{align*}
\pi_{sym}(x) & =  1 - \min(\pi_{fill-in}(b-x),\eta_{aux}(b-x))  \geq  1 - \pi_{fill-in}(b-x)  \\
& \geq  1 - \pi_{comb}(b-x) - \frac{\epsilon}{36}  =  \pi_{comb}(x) - \frac{\epsilon}{36} \\
& =  \Big( 1 - \frac{5\epsilon}{6} \Big)\wt\pi(x) + \frac{5\epsilon}{6}\pi_{\delta_3}(x) -  \frac{\epsilon}{36}  \geq  \frac{5\epsilon}{12} - \frac{\epsilon}{36} \geq 0,
\end{align*}
where the first equality follows from~(\ref{eqn:pi_sym}), the second inequality is a consequence of (ii) of Lemma~\ref{lem:pi_fill_in}, the second equality uses the symmetry of $\pi_{comb}$ established in Proposition~\ref{prop:pi_comb}, the third equality follows from~(\ref{eqn:pi_comb}), and the second to last inequality follows from the fact that $\tilde \pi \geq 0$ (c.f.~Proposition~\ref{prop:pwl}) and the fact that $\pi_{\delta_3}(x) \geq \frac12$ by~(\ref{eqn:pi_delta}) as $x \notin \Lambda_{\delta_3,z}$.
%
\item[(3)] When $\pi_{comb}(x) = \eta_{aux}(x)$, we have 
$\pi_{sym}(x)  = \eta_{aux}(x)  \geq 0$,
again by~(\ref{eqn:pi_sym}) and Lemma~\ref{lem:eta}.
\end{itemize}
Hence, $\pi_{sym}$ satisfies (C1).  

We next show that $\pi_{sym}$ satisfies (C2). If $\pi_{comb}(x) < \eta_{aux}(x)$, then  
\[1-\pi_{comb}(b-x) < 1 - \eta_{aux}(b-x) \iff \pi_{comb}(b-x) > \eta_{aux}(b-x),
\]
by Proposition~\ref{prop:pi_comb} and Lemma~\ref{lem:eta}. 
Hence, by~(\ref{eqn:pi_sym}),
\[
\pi_{sym}(x) + \pi_{sym}(b-x) = \min(\pi_{fill-in}(x) ,\eta_{aux}(x)) + 1 - \min(\pi_{fill-in}(x) ,\eta_{aux}(x)) = 1.
\]
A similar argument demonstrates that $\pi_{sym}(x) + \pi_{sym}(b-x) = 1$ when $\pi_{comb}(x) > \eta_{aux}(x)$.  Finally, if $\pi_{comb}(x) = \eta_{aux}(x)$, then by~(\ref{eqn:pi_sym}) and Lemma~\ref{lem:eta},
\[
\pi_{sym}(x) + \pi_{sym}(b-x) = \eta_{aux}(x) + \eta_{aux}(b-x) = 1.
\]
Thus $\pi_{sym}$ satisfies (C2).

We finally show that $\pi_{sym}$ is subadditive.  Consider the following cases:

{\bf Case 1:}  Suppose that $x,y \notin \Lambda_{\delta_3,z}$, $x+y \notin b- \Lambda_{\delta_3,z}$ for all $z \in \mathbb Z^n$.  Note that by Lemma~\ref{lem:pi_fill_in},
\[
\| \pi_{sym} - \pi_{comb} \|_\infty \leq \| \pi_{sym} - \pi_{fill-in} \|_\infty + \| \pi_{fill-in} - \pi_{comb} \|_\infty \leq \frac{\epsilon}{18} + \frac{\epsilon}{36} = \frac{\epsilon}{12},  
\]
as we have already established that $\| \pi_{sym} - \pi_{fill-in} \|_\infty\leq \frac{\epsilon}{18}$ in the first part of this proof.  
Hence, by (iii) of Proposition~\ref{prop:pi_comb},
\begin{align*}
\Delta \pi_{sym}(x,y) &= \pi_{sym}(x) + \pi_{sym}(y) - \pi_{sym}(x+y)\\
					    &\geq  \pi_{comb}(x) + \pi_{comb}(y) - \pi_{comb}(x+y) - 3\cdot\frac{\epsilon}{12} \geq 0.
\end{align*}

{\bf Case 2:}  Suppose that $x$ or $y$ belongs to $\Lambda_{\delta_3,z}$ for some $z \in \mathbb Z^n$.  Without loss of generality take $x \in \Lambda_{\delta_3,z}$.  Then
by Lemma~\ref{lem:pi_fill_in}, we have that $\pi_{fill-in}(x) = \pi_{comb}(x)$.  
Furthermore, we have already shown that $\pi_{comb}(x) \leq \frac{5\epsilon}{12} < \frac{1}{4} \leq \eta_{aux}(x)$ (see part (2) under the proof of (ii), above).  Thus, $\pi_{sym}(x) = \pi_{comb}(x)$ by~(\ref{eqn:pi_sym}).
Now, since $\pi_{sym}$ is a (continuous) piecewise linear function by (i), the gradient of $\pi_{sym}$ exists almost everywhere.  
Additionally, we established above in the proof of (i) that when this gradient exists, it will belong to the set $ \{ \frac{5\epsilon}{12}g_{\delta_3}^{(1)}, \dots, \frac{5\epsilon}{12} g_{\delta_3}^{(n+1)}\}$.  
Thus, we may write 
\begin{align*}
\pi_{sym}(y) - \pi_{sym}(x+y) & = \pi_{sym}(y) - \pi_{sym}(x-z+y) = - \int_0^1 \langle \nabla \pi_{sym}(y+t(x-z)), x-z \rangle\,dt \\
							    & \geq - \max_{i=1}^{n+1} \frac{5\epsilon}{12} \langle g_{\delta_3}^{(i)}, x-z \rangle = - \frac{5\epsilon}{12} \gamma_{\delta_3}(x-z) = -\pi_{comb}(x),
\end{align*}
where the last equality follows from (iii) in Lemma~\ref{lem:pi_fill_in}.
We therefore deduce that
\[
\Delta\pi_{sym}(x,y) = \pi_{sym}(x) + \pi_{sym}(y) - \pi_{sym}(x+y) \geq \pi_{comb}(x) - \pi_{comb}(x) \geq 0.
\]

{\bf Case 3:}  Suppose that $x+y \in b - \Lambda_{\delta_3,z}$ for some $z \in \mathbb Z^n$.  We have
\begin{align*}
\Delta\pi_{sym}(x,y)& = \pi_{sym}(x) + \pi_{sym}(y) - \pi_{sym}(x+y)\\
				       &  = 1-\pi_{sym}(b-x) + \pi_{sym}(y) + \pi_{sym}(b-x-y) -1\\
				       & = \pi_{sym}(b-x-y) +  \pi_{sym}(y) - \pi_{sym}(b-x),
\end{align*}
where we have used the symmetry of $\pi_{sym}$, i.e., $\pi_{sym}$ satisfies (C2), as established above.
Since $b-x-y \in \Lambda_{\delta_3,z}$, an argument similar to that of Case 2 shows us that 
$$\begin{array}{rcl}\pi_{sym}(b-x-y) +  \pi_{sym}(y) - \pi_{sym}(b-x) &= &\pi_{comb}(b-x-y) +  \pi_{sym}(y) - \pi_{sym}((b-x-y) + y) \\
& \geq & \pi_{comb}(b-x-y) -  \pi_{comb}(b-x-y) \geq 0.\end{array}$$

Hence, $\pi_{sym}$ is subadditive, and thus  minimal.

(iii)
It remains to show that $\pi_{sym}$ is a genuinely $n$-dimensional function.  Suppose that $\pi_{sym}$ is not a genuinely $n$-dimensional function, so that there exists a function $\varphi: \mathbb R^{n-1} \rightarrow \mathbb R$ and a matrix $A \in \mathbb R^{(n-1)\times n}$ such that $\pi_{sym}(x) = \varphi(Ax)$ for all $x \in \mathbb R^n$.  Because $A \in \mathbb R^{(n-1)\times n}$, there must exist nonzero $w \in \mathbb R^n$ such that $Aw = 0$.  Furthermore, since $0 \in \text{int}( \Lambda_{\delta_3,0})$, there exists $c > 0$ such that $cw \in \Lambda_{\delta_3,0}$.  
Now, we observe that
\[
 \pi_{fill-in}(cw) = \pi_{comb}(cw) \leq \frac{5\epsilon}{12} < \frac{1}{4} \leq \eta_{aux}(cw),
\]
where the equality follows from (iii) in Lemma~\ref{lem:pi_fill_in} and the inequalities follow from a previous argument (see part (2) under the proof of (ii), above). 
Thus, by~(\ref{eqn:pi_sym})
\[
\pi_{sym}(cw) =  \pi_{fill-in}(cw) = \pi_{comb}(cw) = \frac{5\epsilon}{12} \gamma_{\delta_3}(cw)> 0,
\]
where the last equality follows from (iii) in Lemma~\ref{lem:pi_fill_in}. However, we also can see that
$
\pi_{sym}(cw) = \varphi(Acw) = \varphi(0) = \pi_{sym}(0) = 0
$,
a contradiction.  Hence, $\pi_{sym}$ must be a genuinely $n$-dimensional function.

We conclude that by~\cite[Theorem 1.7]{basu-hildebrand-koeppe-molinaro:k+1-slope}, $\pi_{sym}$ is an extreme function of the set of  minimal functions.
\end{proof}
%
%
\section{Proof of Theorem~\ref{thm:main_thm}}\label{sec:proof}
%
%
We now give a proof of the main result of this paper, Theorem~\ref{thm:main_thm}, by applying the results from the previous sections.  
%
%

Fix $b \in \mathbb Q^n \setminus \mathbb Z^n$, and let $\epsilon > 0$ be given.  
Let $\pi:\mathbb R^n \rightarrow \mathbb R$ be any continuous,  minimal function.  
Without loss of generality, we may take $b \in ([0,1)^n \cap \mathbb Q^n)\setminus\{0\}$, and $\epsilon \in (0,\frac{1}{2}]$; if $\epsilon > \frac{1}{2}$, we may replace $\epsilon$ with $\frac{1}{2}$, establish the result with this replacement, and see that the result will still then hold for the original $\epsilon$.  
Finally, assume that $n  > 1$; if $n =1$, it has already been established that this result holds by~\cite[Theorem 2]{basu2016minimal}.

There exists $\wt \pi: \mathbb R^n \rightarrow \mathbb R$ satisfying the conclusions of Proposition~\ref{prop:pwl} for some suitable $\wt \delta > 0$ (c.f.~(\ref{eqn:delta_12_1}),~(\ref{eqn:delta_12_2}), and the proof of Proposition~\ref{prop:pwl}), in particular, $\| \pi - \wt \pi\|_\infty < \frac{\epsilon}{36}$.  
Choose an appropriate $\delta_3 > 0$ (c.f.~(\ref{eqn:delta_3})), and form $\pi_{comb}: \mathbb R^n \rightarrow \mathbb R$ via~(\ref{eqn:pi_comb}).  We have that $\pi_{comb}$ satisfies the conclusions of Proposition~\ref{prop:pi_comb}, including $\| \wt \pi - \pi_{comb}\|_\infty \leq \frac{5\epsilon}{6}$.  
Next choose $\delta_4 > 0$ that satisfies~(\ref{eqn:delta_4}), and construct $\pi_{fill-in}: \mathbb R^n \rightarrow \mathbb R$ via~(\ref{eqn:pi_fill_in}).  By Lemma~\ref{lem:pi_fill_in}, we have $\|\pi_{comb} - \pi_{fill-in} \|_\infty \leq \frac{\epsilon}{36}$.  
Furthermore, if we assemble $\pi_{sym}: \mathbb R^n \rightarrow \mathbb R$ as in~(\ref{eqn:pi_sym}), we have by Proposition~\ref{prop:pi_sym} that $\|\pi_{fill-in} - \pi_{sym} \|_\infty \leq \frac{\epsilon}{18}$ and that $\pi_{sym}$ is an extreme function of the set of  minimal functions.
Now,
\begin{align*}
\| \pi - \pi_{sym}\|_\infty &\leq \| \pi - \wt\pi\|_\infty + \| \wt\pi - \pi_{comb}\|_\infty + \| \pi_{comb} - \pi_{fill-in}\|_\infty + \| \pi_{fill-in} - \pi_{sym}\|_\infty \\
&< \frac{\epsilon}{36} +  \frac{5\epsilon}{6} +  \frac{\epsilon}{36} +  \frac{\epsilon}{18}  < \epsilon.
\end{align*}
We complete the proof by setting $\pi^\prime \equiv \pi_{sym}$.

\bibliographystyle{plain}
\bibliography{full-bib}

\providecommand\CheckAccent[1]{\accent20 #1}
\begin{thebibliography}{10}

\bibitem{allgower1997numerical}
Eugene~L Allgower and Kurt Georg.
\newblock Numerical path following.
\newblock {\em Handbook of numerical analysis}, 5:3--207, 1997.

\bibitem{basu2015geometric}
Amitabh Basu, Michele Conforti, and Marco Di~Summa.
\newblock A geometric approach to cut-generating functions.
\newblock {\em Mathematical Programming}, 151(1):153--189, 2015.

\bibitem{basu2016structure}
Amitabh Basu, Michele Conforti, Marco Di~Summa, and Joseph Paat.
\newblock The structure of the infinite models in integer programming.
\newblock {\em arXiv preprint arXiv:1612.06288}, 2016.

\bibitem{basu2016light}
Amitabh Basu, Robert Hildebrand, and Matthias K{\"o}ppe.
\newblock Light on the infinite group relaxation {I}: Foundations and taxonomy.
\newblock {\em 4OR}, 14(1):1--40, 2016.

\bibitem{basu2016light2}
Amitabh Basu, Robert Hildebrand, and Matthias K{\"o}ppe.
\newblock Light on the infinite group relaxation {II}: Sufficient conditions
  for extremality, sequences, and algorithms.
\newblock {\em 4OR}, 14(2):1--25, 2016.

\bibitem{basu-hildebrand-koeppe-molinaro:k+1-slope}
Amitabh Basu, Robert Hildebrand, Matthias K\"oppe, and Marco Molinaro.
\newblock A~$(k+1)$-slope theorem for the $k$-dimensional infinite group
  relaxation.
\newblock {\em SIAM Journal on Optimization}, 23(2):1021--1040, 2013.

\bibitem{basu2016minimal}
Amitabh Basu, Robert Hildebrand, and Marco Molinaro.
\newblock Minimal cut-generating functions are nearly extreme.
\newblock In {\em International Conference on Integer Programming and
  Combinatorial Optimization}, pages 202--213. Springer, 2016.

\bibitem{ccz}
Michele Conforti, G\'erard Cornu{\'e}jols, and Giacomo Zambelli.
\newblock A geometric perspective on lifting.
\newblock {\em Oper. Res.}, 59(3):569--577, 2011.

\bibitem{infinite}
Ralph~E. Gomory and Ellis~L. Johnson.
\newblock Some continuous functions related to corner polyhedra, {I}.
\newblock {\em Mathematical Programming}, 3:23--85, 1972.

\bibitem{infinite2}
Ralph~E. Gomory and Ellis~L. Johnson.
\newblock Some continuous functions related to corner polyhedra, {II}.
\newblock {\em Mathematical Programming}, 3:359--389, 1972.

\bibitem{guler2010foundations}
Osman G{\"u}ler.
\newblock {\em Foundations of optimization}, volume 258.
\newblock Springer Science \& Business Media, 2010.

\bibitem{hudson1969piecewise}
John~FP Hudson and Julius~L Shaneson.
\newblock {\em Piecewise linear topology}, volume~11.
\newblock WA Benjamin New York, 1969.

\bibitem{johnson1974group}
Ellis~L. Johnson.
\newblock On the group problem for mixed integer programming.
\newblock {\em Mathematical Programming Study}, 2:137--179, 1974.

\bibitem{rock}
R.~T. Rockafellar.
\newblock {\em Convex Analysis}.
\newblock Princeton University Press, Princeton, New Jersey, 1970.

\bibitem{scholtes2012introduction}
Stefan Scholtes.
\newblock {\em Introduction to piecewise differentiable equations}.
\newblock Springer Science \& Business Media, 2012.

\bibitem{z}
G{\"u}nter~M. Ziegler.
\newblock {\em Lectures on Polytopes}.
\newblock Springer, 1995.

\end{thebibliography}

\end{document}